\documentclass[leqno]{amsart}
\usepackage{times}
\usepackage{amsfonts,amssymb,amsmath,amsgen,amsthm}
\usepackage{hyperref}
\usepackage{color}
\newcommand{\msc}[2][2000]{%
  \let\@oldtitle\@title%
  \gdef\@title{\@oldtitle\footnotetext{#1 \emph{Mathematics subject
        classification.} #2}}%
}

\theoremstyle{plain}
\newtheorem{theorem}{Theorem}[section]

\newtheorem{lemma}[theorem]{Lemma}
\newtheorem{corollary}[theorem]{Corollary}
\newtheorem{proposition}[theorem]{Proposition}
\newtheorem{hyp}[theorem]{Assumption}
\theoremstyle{remark}
\newtheorem{remark}[theorem]{Remark}

\def\C{{\mathbb C}}
\def\R{{\mathbb R}}
\def\T{{\mathbb T}}

\def\({\left(}
\def\){\right)}
\def\<{\left\langle}
\def\>{\right\rangle}
\def\le{\leqslant}
\def\ge{\geqslant}

\def\Tend#1#2{\mathop{\longrightarrow}\limits_{#1\rightarrow#2}}

\def\d{{\partial}}
\def\eps{\varepsilon}

\def\si{{\sigma}}

\DeclareMathOperator{\RE}{Re}
\DeclareMathOperator{\IM}{Im}
\DeclareMathOperator{\diver}{div}

\numberwithin{equation}{section}

\begin{document}

\title[Scattering for NLS with 1D confinement]{Scattering for the
  nonlinear Schr\"odinger equation with a general 
  one-dimensional confinement}

\author[R. Carles]{R\'emi Carles}
\address{CNRS \& Univ. Montpellier\\Math\'ematiques
\\CC~051\\34095 Montpellier\\ France}
\email{Remi.Carles@math.cnrs.fr}

\author[C. Gallo]{Cl\'ement Gallo}
\address{CNRS \& Univ. Montpellier\\Math\'ematiques
\\CC~051\\34095 Montpellier\\ France}
\email{Clement.Gallo@univ-montp2.fr}

\begin{abstract}
We consider the defocusing nonlinear Schr\"odinger equation in several
space dimensions, in the presence of an external potential depending
on only one space variable. This potential is bounded from below, and
may  grow arbitrarily fast at infinity. We prove existence and
uniqueness in the associated Cauchy problem, in a suitable functional
framework, as well as the existence of wave operators when the power
of the nonlinearity is sufficiently large. Asymptotic completeness
then stems from at least two approaches, which are briefly recalled. 
\end{abstract}
\thanks{This project was supported by the French ANR projects
  SchEq (ANR-12-JS01-0005-01) and BECASIM
  (ANR-12-MONU-0007-04).} 
\maketitle

\section{Introduction}
\label{sec:intro}

We consider the large time behavior for the nonlinear Schr\"odinger
equation
\begin{equation}
  \label{eq:nls}
  i\d_t u +\frac{1}{2}\Delta u = V(x)u + |u|^{2\si}u,
\end{equation}
where $u:(t,x,y)\in \R\times \R\times \R^{d-1}\to \C $, with $d\ge 2$,
$\Delta$ is the Laplacian in $(x,y)$,
and $0<\si<\frac{2}{(d-2)_+}$ (where $1/a_+$ stands for $+\infty$ if $a\le 0$, and for $1/a$ if $a>0$): the nonlinearity is energy-subcritical
in terms of the whole space dimension $d$. The external potential $V$
depends only on $x$. More precisely, we suppose:
\begin{hyp}
\label{hyp:V} 
The potential $V\in L^2_{\rm loc}(\R)$ is real-valued and bounded from
below: 
\begin{equation*}
  \exists C_0,\quad V(x)+C_0\ge 0,\quad \forall x\in \R. 
\end{equation*}
\end{hyp}
It follows from \cite[Theorem X.28]{ReedSimon2} that 
\begin{equation*}
  H = -\frac{1}{2}\Delta +V(x)
\end{equation*}
is essentially self-adjoint on $C_0^\infty(\R^d)$, with domain
(\cite[Theorem X.32]{ReedSimon2})
\begin{equation*}
  D(H) = \{f\in L^2(\R^d),\quad -\frac{1}{2}\Delta f+Vf\in L^2(\R^d)\}. 
\end{equation*}
The goal of this paper is to understand the large time dynamics in
\eqref{eq:nls}. This framework is to be compared with the analysis in
\cite{TzVi-p}, where there is no external potential ($V=0$), but where
the $x$ variable belongs to the torus $\T$ (which is the only
one-dimensional compact manifold without boundary).  It is proven
there that if a short range scattering theory is available for the
nonlinearity $|u|^{2\si}u$ in $H^1(\R^{d-1})$, that is if
$\frac{2}{d-1}<\si<\frac{2}{(d-2)_+}$, then the solution of the Cauchy
problem for $(x,y)\in
\T\times \R^{d-1}$ (is global and) is asymptotically linear as $t\to
\infty$.
\smallbreak

 In this paper, we prove the analogous result in the case of
\eqref{eq:nls}, as well as the existence of wave operators (Cauchy
problem with behavior prescribed at infinite time). This extends some
of the results from \cite{AnCaSi-p} where the special case of an
harmonic potential $V$ is considered. The  properties related
to the harmonic potentials are exploited to prove the existence of
wave operators in the case of a multidimensional confinement
($V(x)=|x|^2$, $x\in \R^n$, $n\ge 1$), a case that we do not consider
in the present paper (see Remark~\ref{rem:higherD}): essentially, if the nonlinearity is
short range on $\R^{d-n}$, then it remains short range on $\R^d$ with
$n$ confined directions. Long range effects are described in
\cite{HaTh-p}, in the case $n=d-1$ and $\si=1$ (cubic nonlinearity,
which is exactly the threshold to have long range scattering in one
dimension). A technical difference with \cite{TzVi-p} is that for the
Cauchy problem, we do
not make use of inhomogeneous Strichartz for non-admissible pairs like
established in \cite{CW92,FoschiStri,Vil07}, and for scattering
theory, such estimates are not needed when $d\le 4$. 
\smallbreak

We emphasize that here,  the potential $V$ 
 can have essentially any behavior, provided that it remains
bounded from below. It can be bounded (in which case the term
``confinement'' is inadequate), or grow arbitrarily fast as $x\to \pm
\infty$. This is in sharp contrast with
e.g. \cite{Miz14,YajZha01,YajZha04}, where Strichartz estimates (with
loss) are established in the presence of super-quadratic potentials,
or with \cite{BCM08}, where a functional calculus adapted to confining
potentials is developed: in all these cases, typically, an exponential growth of
the potential is ruled out, since in this case, no pseudo-differential
calculus is available. 
\smallbreak

Introduce the notation
\begin{equation*}
  M_x =-\frac{1}{2}\d_x^2+V(x)+C_0.
\end{equation*}
We define the spaces 
\begin{align*}
  & B_x=\left\{u\in L^2(\R),M_x^{1/2}u\in L^2(\R)\right\},\quad
\Sigma_y=\left\{u\in H^1(\R^{d-1}),yu\in L^2(\R^{d-1})\right\}, \\
&  Z = L^2_yB_x\cap L^2_xH^1_y,\quad \tilde Z=L^2_yB_x\cap L^2_x\Sigma_y,
\end{align*}
endowed with the norms
$$\|u\|_{B_x}^2=\|u\|_{L^2_x(\R)}^2+\|M_x^{1/2}u\|_{L^2_x(\R)}^2=
\|u\|_{L^2_x(\R)}^2+\<M_x u,u\>,$$
$$\|u\|_{\Sigma_y}^2=\|u\|_{L^2_y(\R^{d-1})}^2+\|\nabla_y
  u\|_{L^2_y(\R^{d-1})}^2+\|yu\|_{L^2_y(\R^{d-1})}^2,$$ 
and
$$\|u\|_Z^2=\|u\|_{L^2_{xy}(\R^d)}^2+\|M_x^{1/2}u\|_{L^2_{xy}(\R^d)}^2+
  \|\nabla_y u\|_{L^2_{xy}(\R^d)}^2 ,\quad \|u\|_{\tilde Z}^2=
  \|u\|_Z^2+ \|y u\|_{L^2_{xy}(\R^d)}^2.$$  
The group $e^{-itH}$ is unitary on $Z$, but not on $\tilde Z$, a
property which is discussed  in the proof of
Lemma~\ref{lem:opA}. 
  \begin{remark}
    Note that $B_x$ is the domain of the operator $M_x^{1/2}$, which is
defined as a fractional power of the self-adjoint operator $M_x$
acting on $L^2(\R)$: for $u\in B_x$, $M_x^{1/2}u$ is defined by

$$M_x^{1/2}u=\int_0^\infty\lambda^{1/2}dE_\lambda(u),$$
where $M_x=\int_0^\infty\lambda dE_\lambda$ is the spectral
decomposition of $M_x$.
\end{remark}

\begin{theorem}[Cauchy problem]\label{theo:cauchy}
 Let $d\ge 2$, $V$ satisfying Assumption~\ref{hyp:V} and
 $0<\si<\frac{2}{(d-2)_+}$.  Let $t_0\in \R$ and $u_0\in Z$. There
 exists a unique solution 
 $  u\in C(\R;Z) $ to \eqref{eq:nls} such that $u_{\mid
   t=t_0}=u_0$. The following two quantities are independent of time:
 \begin{align*}
   &\text{Mass: }\|u(t)\|_{L^2_{xy}(\R^d)}^2,\\
&\text{Energy: }\frac{1}{2}\|\nabla_{xy}u(t)\|_{L^2_{xy}(\R^d)}^2 +
  \frac{1}{\si+1}\|u(t)\|_{L_{xy}^{2\si+2}(\R^d)}^{2\si+2} +\int_{\R^d} V(x) |u(t,x,y)|^2dxdy.
 \end{align*}
If in addition $u_0\in \tilde Z$, then $u\in
 C(\R;\tilde Z)$.\end{theorem}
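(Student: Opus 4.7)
The plan is to reduce everything to the free Schr\"odinger propagator in $y$ twisted by a unitary operator in $x$. Since $H = M_x - C_0 - \frac{1}{2}\Delta_y$ and $M_x$ commutes with $\Delta_y$, the group factors as
$$e^{-itH} = e^{itC_0}\,e^{-itM_x}\,e^{i(t/2)\Delta_y}.$$
The two $x$-side factors are unitary on $L^2(\R_x)$ and hence on any mixed space $L^r_y L^2_x$, so standard Strichartz estimates for the free Schr\"odinger propagator in $\R^{d-1}$ lift immediately:
$$\|e^{-itH}f\|_{L^q_t L^r_y L^2_x} \lesssim \|f\|_{L^2_{xy}}, \qquad \frac{2}{q}+\frac{d-1}{r}=\frac{d-1}{2},$$
together with the corresponding inhomogeneous bound. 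Since $M_x^{1/2}$ and $\nabla_y$ commute with $H$, the same estimates hold for $M_x^{1/2}u$ and $\nabla_y u$, so the full $Z$-norm can be tracked in Strichartz spaces.

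Local well-posedness is then a standard contraction on the Duhamel map $\Phi(u)(t) = e^{-i(t-t_0)H}u_0 - i\int_{t_0}^t e^{-i(t-s)H}(|u|^{2\sigma}u)(s)\,ds$ in a ball of
$$X_T = L^\infty((t_0-T,t_0+T);Z)\cap L^q((t_0-T,t_0+T);L^r_y(B_x\cap H^1_y))$$
for a suitable admissible pair $(q,r)$ in dimension $d-1$. The main tool in controlling the nonlinearity is the one-dimensional Sobolev embedding $B_x\hookrightarrow H^1(\R_x)\hookrightarrow L^\infty(\R_x)$, available because $V+C_0\geq 0$ forces $\|\partial_x u\|_{L^2}^2\leq 2\|M_x^{1/2}u\|_{L^2}^2$; this lets us extract $2\sigma$ factors of $\|u(\cdot,y)\|_{L^\infty_x}$ from $\||u|^{2\sigma}u\|_{L^2_x}$ and reduces matters to a classical nonlinear Strichartz estimate on $\R^{d-1}$. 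Subcriticality $\sigma<2/(d-2)_+$ then gives a positive power of $T$ and hence a contraction for $T$ small depending only on $\|u_0\|_Z$.

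Mass conservation follows from skew-adjointness of $iH$ by pairing with $\bar u$. Energy conservation is obtained formally by pairing with $\partial_t\bar u$ and justified through a regularization argument (approximate $u_0$ in $D(H)$ and pass to the limit using continuity of the flow in $Z$). Because the energy controls $\|\nabla_y u\|_{L^2}^2+\|M_x^{1/2}u\|_{L^2}^2$, the local existence time stays bounded away from zero along the iteration, so the solution extends to $u\in C(\R;Z)$.

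For the propagation of $\tilde Z$ we introduce the Galilean-type operators $A_j(t)=y_j+it\partial_{y_j}$, $1\le j\le d-1$. Since $V$ is independent of $y$, a direct commutator computation using $[H,y_j]=-\partial_{y_j}$ gives $[i\partial_t-H,A_j]=0$, so $v_j:=A_j u$ satisfies a linearized NLS with source $A_j(|u|^{2\sigma}u)$; the pointwise bound $|A_j(|u|^{2\sigma}u)|\leq C|u|^{2\sigma}|A_j u|$ allows us to run the same contraction for $v_j$ in $C(\R;L^2_{xy})$. Since $y_j u = v_j - it\partial_{y_j}u$ and $\partial_{y_j}u\in C(\R;L^2_{xy})$ is already known, this yields $u\in C(\R;\tilde Z)$. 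The main technical obstacle is that the Strichartz estimate sees only dimension $d-1$ while the nonlinearity is energy-subcritical only in the full dimension $d$; closing the fixed point therefore requires splitting Sobolev embeddings between the $x$ direction (via $H^1(\R_x)\hookrightarrow L^\infty(\R_x)$) and the $y$ direction ($H^1(\R^{d-1})\hookrightarrow L^{2\sigma+2}(\R^{d-1})$, valid since $\sigma<2/(d-2)_+\leq 2/(d-3)_+$), together with careful bookkeeping of the time exponents to produce the required positive power of $T$.
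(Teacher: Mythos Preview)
Your overall strategy coincides with the paper's: Strichartz estimates in $L^q_tL^r_yL^2_x$ coming from the factorisation of $e^{-itH}$, a contraction on the Duhamel map tracking $u$, $M_x^{1/2}u$, $\nabla_y u$, conservation laws for globalisation, and the Galilean operator $A_3(t)=y+it\nabla_y$ for $\tilde Z$. Two technical points, however, are glossed over in your sketch and are precisely where the work lies.

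First, you only discuss extracting $\|u\|_{L^\infty_x}^{2\sigma}$ from $\||u|^{2\sigma}u\|_{L^2_x}$, but to close the fixed point in $Z$ you must also control $\|M_x^{1/2}(|u|^{2\sigma}u)\|_{L^2_x}$. Since $M_x^{1/2}$ is not a differential operator, this does not follow from a Leibniz rule; the paper handles it by expanding $\|M_x^{1/2}f\|_{L^2_x}^2=\langle M_xf,f\rangle=\tfrac12\|\partial_xf\|_{L^2_x}^2+\int(V+C_0)|f|^2$, applying the chain rule to $\partial_x$ and pointwise positivity to the potential term, and repackaging. This is the content of the second item of Lemma~\ref{lem:opA} and you should state it.

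Second, your phrase ``splitting Sobolev embeddings between the $x$ and $y$ directions'' does not by itself yield the needed bound $\|u\|_{L^k_yL^\infty_x}\lesssim\|u\|_Z$. The space $Z$ gives only $L^2_yB_x$ and $H^1_yL^2_x$; applying $B_x\hookrightarrow L^\infty_x$ to the first leaves you in $L^2_yL^\infty_x$, and applying $H^1_y\hookrightarrow L^k_y$ to the second leaves you in $L^k_yL^2_x$, neither of which is $L^k_yL^\infty_x$. One must interpolate first: $L^2_yH^1_x\cap H^1_yL^2_x\hookrightarrow H^\gamma_yH^s_x$ with $s+\gamma=1$, $s>1/2$, and only then apply the embeddings in each variable. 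This is exactly the paper's Proposition~\ref{sob-anis} and Corollary~\ref{coro}, and it is the mechanism that produces the factor $T^{2\sigma/\theta}$ in the contraction. Your mention of $H^1(\R^{d-1})\hookrightarrow L^{2\sigma+2}(\R^{d-1})$ is not the relevant embedding here.

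With these two points filled in, your argument becomes the paper's proof.
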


\begin{theorem}[Existence of wave operators]\label{theo:waveop}
   Let $d\ge 2$, and $V$ satisfying Assumption~\ref{hyp:V}. \\
$1.$ If $u_-\in Z$ and $\frac{2}{d-1}\le \si< \frac{2}{(d-2)_+}$,
there exists $u\in C(\R;Z)$
solution to \eqref{eq:nls} such that
\begin{equation*}
  \| u(t)-e^{-itH}u_-\|_Z=\|e^{itH} u(t)-u_-\|_Z\Tend t {-\infty}0.
\end{equation*}
This solution is such that
\begin{equation*}
 u\in L^\infty(\R;Z) \cap L^p((-\infty,0];L^k_yL^2_x)
\end{equation*}
for some pair $(p,k)$ given in the proof, and it is unique in this class. \\
$2.$ If $u_-\in \tilde Z$ and
$\frac{2}{d}<\si<\frac{2}{(d-2)_+}$,
there exists a unique $u\in C(\R;\tilde Z)$
solution to \eqref{eq:nls} such that
\begin{equation*}
 e^{itH} u\in L^\infty(]-\infty,0];\tilde Z) \quad\text{and}\quad
 \|e^{itH} u(t)-u_-\|_{\tilde Z}\Tend t {-\infty}0. 
\end{equation*}
\end{theorem}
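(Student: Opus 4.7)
The plan is to construct the wave operator by the standard Duhamel-at-$-\infty$ approach: I would solve the integral equation
$$u(t)=e^{-itH}u_- -i\int_{-\infty}^t e^{-i(t-s)H}\bigl(|u|^{2\si}u\bigr)(s)\,ds$$
by a Banach fixed-point argument on $(-\infty,T]$ for $T$ sufficiently negative, and then extend the resulting local solution to all of $\R$ by invoking Theorem~\ref{theo:cauchy}. The key structural ingredient is that $V$ depends only on $x$, so $H_x:=-\tfrac12\d_x^2+V(x)+C_0$ commutes with every operation in $y$ and (up to a $C_0$-phase) the propagator factorizes as $e^{-itH}=e^{-itH_x}\,e^{it\Delta_y/2}$. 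Consequently, the Strichartz estimates for the free Schr\"odinger group in $\R^{d-1}$ lift to $e^{-itH}$ with $x$ treated as a parameter: for every $(d-1)$-admissible pair $(q,r)$,
$$\|e^{-itH}\varphi\|_{L^q_tL^r_yL^2_x}\lesssim \|\varphi\|_{L^2_{xy}},\qquad \Bigl\|\int_{-\infty}^t e^{-i(t-s)H}F(s)\,ds\Bigr\|_{L^q_tL^r_yL^2_x}\lesssim \|F\|_{L^{q'}_tL^{r'}_yL^2_x},$$
and since $M_x^{1/2}$ and $\nabla_y$ commute with $e^{-itH}$, the same estimates hold with the $Z$-norm on the right.

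For Part~$1$, I would choose the $(d-1)$-admissible pair $(p,k)$ with $k=2\si+2$; the hypothesis $\si\ge 2/(d-1)$ guarantees that $k$ lies in the admissible range and produces a finite $p$. H\"older and Sobolev inequalities in the $(x,y)$-variables then bound $\bigl\||u|^{2\si}u\bigr\|_{L^{p'}_tL^{k'}_yL^2_x}$ by a small power of $\|u\|_{L^p_tL^k_yL^2_x}$ times powers of $\|u\|_{L^\infty_tZ}$. The contraction can then be run on a small ball of
$$X_T=L^\infty((-\infty,T];Z)\cap L^p((-\infty,T];L^k_yL^2_x),$$
with smallness of the linear piece, $\|e^{-itH}u_-\|_{L^p((-\infty,T];L^k_yL^2_x)}\to 0$ as $T\to-\infty$, coming from the global Strichartz bound together with a density approximation of $u_-$ in $Z$. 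Uniqueness in the announced class follows from the same estimates applied to the difference of two solutions.

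For Part~$2$, the extra control $yu\in L^2$ suggests introducing the Heisenberg derivative in the transverse variable, $J(t):=y+it\nabla_y$, which commutes with $i\d_t-H$ and acts on the power nonlinearity by a Leibniz-type identity. The factorization $J(t)=it\,e^{i|y|^2/(2t)}\nabla_y\bigl(e^{-i|y|^2/(2t)}\,\cdot\,\bigr)$ yields the weighted Gagliardo--Nirenberg decay estimate
$$\|u(t)\|_{L^r_yL^2_x}\lesssim |t|^{-(d-1)(1/2-1/r)}\|u\|_{L^2_{xy}}^{1-\theta}\|J(t)u\|_{L^2_{xy}}^{\theta},\qquad \theta=(d-1)\(\tfrac12-\tfrac1r\),$$
for $r$ in the admissible range. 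The gain of $|t|$-decay extracted from $J(t)$ makes the Duhamel integral converge at $-\infty$ under the weaker threshold $\si>2/d$, so one would run the contraction in a $\tilde Z$-valued analogue of $X_T$ whose norm also controls $J(t)u$ in $L^\infty_tL^2_{xy}$ and in an auxiliary Strichartz norm.

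The main technical obstacle, foreshadowed in Lemma~\ref{lem:opA}, is that $e^{-itH}$ is \emph{not} unitary on $\tilde Z$: commuting $y$ past $e^{-itH}$ produces $t\,e^{-itH}\nabla_y$, and hence $\|e^{-itH}u_-\|_{\tilde Z}$ can grow linearly in $|t|$. The fixed-point scheme in Part~$2$ has to absorb this linear growth by means of the $|t|$-decay produced by $J(t)$, and balancing those two competing effects is precisely what pins down the threshold $\si>2/d$. Once this balance is secured, the Leibniz-type estimates for $J(t)(|u|^{2\si}u)$, the time-integrability of the resulting bounds, and the closing of the contraction follow the classical pattern and introduce no further difficulty.
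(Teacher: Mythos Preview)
Your strategy matches the paper's: fixed point for the Duhamel formula at $-\infty$ on $(-\infty,-T]$ via $(d-1)$-dimensional Strichartz estimates, commuting with the vector fields $A_0,\dots,A_2$ (and $J(t)=A_3(t)$ for Part~2), then globalizing by Theorem~\ref{theo:cauchy}. Two points deserve a sharper statement, however.

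In Part~2, the dispersive estimate you write down is in $L^r_yL^2_x$, but since the nonlinearity must be placed in $L^{q'}_tL^{r'}_yL^2_x$ and $\||u|^{2\si}u\|_{L^2_x}$ requires $L^\infty_x$-control of $u$, what is actually needed is decay in $L^k_yL^\infty_x$. The paper obtains this through an anisotropic Gagliardo--Nirenberg inequality combined with Young's inequality $\langle\xi\rangle^{2s}|t\eta|^{2\gamma}\lesssim\langle\xi\rangle^2+|t\eta|^2$ (with $s+\gamma=1$), which is precisely what avoids the cross-derivative $M_x^{1/2}J(t)u$ that $\tilde Z$ does \emph{not} control. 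Your $L^r_yL^2_x$ estimate can be upgraded to the same $L^k_yL^\infty_x$ decay by one-dimensional Gagliardo--Nirenberg in $x$ followed by H\"older in $y$, provided the $B_x$-factor is placed in $L^2_y$; this gives the same rate $|t|^{-(d-1)(1/2-1/k)}$ and the same constraint $k<2(d-1)/(d-2)$, hence the same threshold $\si>2/d$.

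Your remark about ``balancing linear growth'' is off the mark: once the contraction norm is built from $\sum_{j=0}^3\|A_j(t)u\|_{L^2_{xy}}\approx\|e^{itH}u\|_{\tilde Z}$, the free evolution is uniformly bounded in $t$ and there is nothing to balance. The threshold $\si>2/d$ arises purely from the requirement that $t\mapsto |t|^{-(d-1)(1/2-1/k)}$ lie in $L^\theta((-\infty,-1])$ together with the Sobolev constraint on $k$. Similarly, in Part~1 the smallness driving the contraction involves not only $\|u\|_{L^p_TL^k_yL^2_x}$ but also $\|A_1u\|_{L^p_TL^k_yL^2_x}$, both of which tend to zero as $T\to-\infty$ by Strichartz.
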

  In the second case, the lower bound $\si>\frac{2}{d}$ is weaker than
  in the first case, so there is some gain in working in the smaller
  space $\tilde Z$ rather than in $Z$. However, this lower bound is larger than in the
  corresponding 
  result from \cite{AnCaSi-p} where only the case $V(x)=x^2$ is
  considered. Indeed in \cite{AnCaSi-p}, the general lower bound is
  $\si>\frac{2d}{d+2}\frac{1}{d-1}$, which is smaller than the present
  one as soon as $d\ge 3$. The main technical reason is that specific
  properties of the harmonic oscillator (typically, the fact that it
  generates a flow which is periodic in time) makes it possible to
  establish a larger set of Strichartz estimates than the one which we
  use in the present paper. In all cases, the expected borderline
  between short range and long range scattering is
  $\si_c=\frac{1}{d-1}$ ($d-1$ is the ``scattering dimension''), so
  our result is sharp in the case $d=2$, and 
  most likely only in this case.

\begin{theorem}[Asymptotic completeness]\label{theo:AC}
   Let $d\ge 2$, $V$ satisfying Assumption~\ref{hyp:V}, and
   $\frac{2}{d-1}<\si<\frac{2}{(d-2)+}$. For any $u_0\in Z$, there
   exists a unique $u_+\in Z$ such that the solution to \eqref{eq:nls}
   with $u_{\mid t=0}=u_0$ satisfies
   \begin{equation*}
     \|u(t)-e^{-itH}u_+\|_Z =\|e^{itH}u(t)-u_+\|_Z\Tend t {+\infty}
     0. 
   \end{equation*}
\end{theorem}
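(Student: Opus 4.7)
The plan is to reduce asymptotic completeness to a scattering problem in the $y$-variable on $\R^{d-1}$, exploiting that $V$ depends only on $x$. Since $H_x:=-\tfrac12\d_x^2+V(x)$ and $-\tfrac12\Delta_y$ act on disjoint variables and commute, one has $e^{-itH}=e^{-itH_x}\,e^{it\Delta_y/2}$. Hence the standard Strichartz estimates on $\R^{d-1}$ for the free Schr\"odinger group $e^{it\Delta_y/2}$ transfer directly to $e^{-itH}$ when phrased in mixed $L^k_yL^2_x$-norms. Theorem~\ref{theo:cauchy} already provides a global solution $u\in C(\R;Z)\cap L^\infty(\R;Z)$ via mass and energy conservation, so all global-in-time quantities below make sense.

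The core step is a Morawetz-type a priori estimate in $y$ only. Because multiplication by $a(y)$ and the operator $\nabla_y$ both commute with $H_x$, the contribution of the $x$-part of $H$ to the virial identity obtained from the multiplier $a(y)\nabla_y u$ cancels, and one inherits the same Morawetz inequalities as for the free NLS on $\R^{d-1}$. For $d\ge 4$ this yields a Lin--Strauss--Ginibre--Velo bound
\begin{equation*}
\int_\R\int_{\R^{d-1}}\int_\R \frac{|u(t,x,y)|^{2\si+2}}{|y|}\,dx\,dy\,dt \le C\|u_0\|_Z^2,
\end{equation*}
while for $d\in\{2,3\}$ one would appeal to an interaction Morawetz variant \`a la Planchon--Vega or Colliander--Grillakis--Tzirakis, applied to the $L^2_x$-density $\rho(t,y):=\|u(t,\cdot,y)\|_{L^2_x}^2$. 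The hypothesis $\si>2/(d-1)$ places the nonlinearity strictly above the short-range threshold for scattering dimension $d-1$.

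With this global space-time integrability secured, I would partition $[0,+\infty)$ into finitely many intervals on which the relevant Strichartz norm of $u$ is small, and run the usual bootstrap against the $\R^{d-1}$-Strichartz estimates transferred to $e^{-itH}$ to deduce $u\in L^p([0,+\infty);L^k_yL^2_x)$ for an admissible pair $(p,k)$, together with the analogous bounds after applying $M_x^{1/2}$ and $\nabla_y$. A Cook-type argument applied to
\begin{equation*}
e^{itH}u(t)-e^{isH}u(s) = -i\int_s^t e^{i\tau H}\bigl(|u|^{2\si}u\bigr)(\tau)\,d\tau
\end{equation*}
then shows that $\{e^{itH}u(t)\}_{t\ge 0}$ is Cauchy in $Z$ as $t\to+\infty$, which defines $u_+\in Z$; uniqueness of $u_+$ is immediate from the fact that $e^{-itH}$ is a $Z$-isometry.

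I expect the Morawetz step to be the main obstacle. Justifying the vector-valued integrations by parts associated with the multiplier $a(y)\nabla_y$ in the presence of a potential $V$ of arbitrary growth requires a careful regularization in $x$, and in the low-dimensional cases one must manipulate the tensor density $|u(t,x_1,y_1)|^2|u(t,x_2,y_2)|^2$ under $e^{-itH}$ rather than under the free group; this is still harmless because all the virial operations involve only $y$, but the bookkeeping is delicate. Once these justifications are in place, the proof proceeds along one of the two standard routes (Ginibre--Velo or Colliander--Keel--Staffilani--Takaoka--Tao) alluded to in the introduction.
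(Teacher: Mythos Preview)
Your outline is essentially correct and the overall architecture---factoring $e^{-itH}=e^{-itH_x}e^{it\Delta_y/2}$, running a $y$-only Morawetz estimate in which the $V(x)$-contribution drops out, then bootstrapping into global Strichartz bounds and concluding by Cook's argument---matches the paper's strategy. The main divergence is in the Morawetz step itself. You split the argument by dimension: a classical Lin--Strauss one-particle estimate $\int\!\!\int\!\!\int |u|^{2\si+2}/|y|\,dx\,dy\,dt<\infty$ for $d\ge 4$, and an interaction-Morawetz variant only for $d\in\{2,3\}$. The paper instead runs \emph{interaction} Morawetz uniformly in $d\ge 2$, computing the bilinear virial with two weights $a(y-y')=|y-y'|$ and $a(y-y')=\langle y-y'\rangle$. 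The first weight yields control of the marginal $R(t,y)=\int_\R|u|^2\,dx$ via $\||\nabla_y|^{(4-d)/2}R\|_{L^2_{t,y}}$; the second yields a localized mass quantity $\int\bigl(\sup_{(x_0,y_0)}\|u\|_{L^2(Q(x_0,y_0,\mu))}^2\bigr)^{\si+2}dt$, following Tzvetkov--Visciglia. The paper then observes that the first estimate is delicate to exploit when $d\ge 5$ (negative-order derivatives) and uses the second one, combined with a result of Visciglia, to obtain $\|u(t)\|_{L^r_{xy}}\to 0$ for all $2<r<\tfrac{2d}{(d-2)_+}$, after which the conclusion follows from the inhomogeneous Strichartz estimates for non-admissible pairs as in \cite{TzVi-p}. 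Your Ginibre--Velo route from the one-particle Morawetz should also go through for $d\ge4$ and has the advantage of being more self-contained, but it forces the dimensional case split and requires reproducing the (nontrivial) $H^1$-scattering machinery of \cite{GV85} in the mixed $L^k_yL^2_x$ framework; the paper's choice buys a single argument covering all $d\ge 2$ by quoting \cite{Vi09} and \cite{TzVi-p} directly.
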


\begin{remark}\label{rem:higherD}
  When a confinement is present (due either to a harmonic potential,
  or  to a bounded geometry) in $n$ directions, for a total space
  dimension $d$, it is expected that the ``scattering dimension'' is
  $d-n$. This was proven systematically in the case of a harmonic
  confinement in \cite{AnCaSi-p}, complemented by
  \cite{HaTh-p}; see also \cite{HPTV-p,TzVi12}. Therefore, to prove asymptotic 
  completeness thanks to Morawetz estimates, it is natural to assume
  $\si>\frac{2}{d-n}$ (essentially because it is not known how to take
  advantage of these estimates otherwise, except in the $L^2$-critical
  case, where many other tools are used). On the other hand, for the Cauchy
  problem to be locally well-posed at the $H^1$-level, it is necessary
  to assume $\si \le \frac{2}{d-2}$ if $d\ge 3$. For the above two
  conditions to be consistent in the energy-subcritical case
  $\si<\frac{2}{d-2}$, we readily see that the only possibility is
  $n=1$, as in \cite{TzVi-p} and the present paper. To treat the case
  $n=2$,the analysis of a doubly critical case would be required: $L^2$-critical
  in $\R^{d-n}$ with $\si=\frac{2}{d-n}$, and energy-critical in $\R^d$
  with $\si=\frac{2}{d-2}$. 
\end{remark}

\section{Technical preliminaries}
\label{sec:prelim}

\subsection{Sobolev embeddings}

\begin{lemma}\label{lem-emb-bm-hm}
$B_x$ is continuously embedded into $H^{1}_x(\R)$. 
\end{lemma}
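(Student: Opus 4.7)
The plan is to reduce the inclusion $B_x\hookrightarrow H^1_x(\R)$ to an inequality involving the quadratic form of $M_x$, which by Assumption~\ref{hyp:V} controls $\|\partial_x u\|_{L^2}^2$ because $V+C_0\ge 0$. More precisely, my target inequality is
\begin{equation*}
 \tfrac{1}{2}\|\partial_x u\|_{L^2(\R)}^2\le \<M_x u,u\>=\|M_x^{1/2}u\|_{L^2(\R)}^2\qquad \forall u\in B_x,
\end{equation*}
which immediately yields $\|u\|_{H^1_x}^2\le 2\|u\|_{B_x}^2$.

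For $u\in C_0^\infty(\R)$ the inequality is trivial from the integration by parts
\begin{equation*}
 \<M_x u,u\>=\tfrac{1}{2}\int_\R |\partial_x u|^2\,dx+\int_\R(V(x)+C_0)|u|^2\,dx,
\end{equation*}
using that $V\in L^2_{\rm loc}$ makes the second integral well-defined and that $V+C_0\ge 0$ makes it nonnegative. The substance of the proof is therefore to transfer this bound from $C_0^\infty$ to the whole domain $B_x=D(M_x^{1/2})$.

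The natural way to do this is via quadratic form theory. As in the cited Reed--Simon result used for $H$, the operator $M_x$ is essentially self-adjoint and nonnegative on $C_0^\infty(\R)$, hence it coincides with the Friedrichs extension of its restriction to $C_0^\infty$. The associated closed quadratic form is $q(u)=\|M_x^{1/2}u\|_{L^2}^2$ with form domain $B_x$, and $C_0^\infty(\R)$ is a form core: for any $u\in B_x$ there exists $(u_n)\subset C_0^\infty(\R)$ with $u_n\to u$ in $L^2$ and $q(u_n-u_m)\to 0$ as $n,m\to\infty$.

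Applying the $C_0^\infty$ identity to $u_n-u_m$ and using $V+C_0\ge 0$, we obtain
\begin{equation*}
 \tfrac{1}{2}\|\partial_x(u_n-u_m)\|_{L^2(\R)}^2\le q(u_n-u_m)\to 0,
\end{equation*}
so $(\partial_x u_n)$ is Cauchy in $L^2$; its limit must equal $\partial_x u$ in the sense of distributions, hence $\partial_x u\in L^2(\R)$. Letting $m\to\infty$ in the analogous inequality for $u_n$ itself, the lower semicontinuity (in fact continuity along this approximating sequence) of both sides yields $\tfrac{1}{2}\|\partial_x u\|_{L^2}^2\le q(u)=\|M_x^{1/2}u\|_{L^2}^2$, which closes the argument.

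The only genuinely non-cosmetic step is identifying $B_x$ with the form domain of $M_x$ and justifying that $C_0^\infty(\R)$ is a form core. Once this is granted, everything else is a one-line consequence of $V+C_0\ge 0$. I expect this to be the main (and essentially only) obstacle, and it is standard given that $V\in L^2_{\rm loc}$ and $M_x$ is essentially self-adjoint on $C_0^\infty(\R)$.
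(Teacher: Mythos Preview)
Your proof is correct and follows essentially the same approach as the paper: both hinge on the inequality $\tfrac{1}{2}\|\partial_x u\|_{L^2}^2\le\<M_x u,u\>$, which comes from $V+C_0\ge 0$. The paper simply writes this inequality for $u\in B_x$ without further comment, whereas you justify it carefully by passing from $C_0^\infty(\R)$ to $B_x$ via the form-core property; this extra rigor is welcome but does not constitute a different route.
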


\begin{proof}
Since $V $ is bounded from below, we have
\begin{align*}
  \|u\|_{H^1_x(\R)}^2&\le\|u\|_{L^2_x(\R)}^2+\|\partial_xu\|_{L^2_x(\R)}^2+2\int_\R
\(V(x)+C_0\)|u(x)|^2dx\\
&\le \|u\|_{L^2_x(\R)}^2+2\<M_x u,u\>\lesssim\|u\|_{B_x}^2,
\end{align*}
hence the result. 
\end{proof}
Introduce, for $\gamma,s\ge 0$,  the anisotropic Sobolev space
$$H^\gamma_yH^s_x=(1-\Delta_y)^{-\gamma/2}(1-\partial_x^2)^{-s/2}L^2_{x,y},$$
endowed with the norm
$$\|u\|_{H^\gamma_yH^s_x}^2=\int_{\R\times\R^{d-1}}\<\xi\>^{2s}
\<\eta\>^{2\gamma}|\widehat{u}(\xi,\eta)|^2d\xi 
d\eta,$$
where $\hat{u}$ denotes the Fourier transform of $u$ in both $x$ and
$y$ variables. $\dot{H}^\gamma_yH^s_x$ denotes the corresponding
homogeneous space, endowed with the norm
$$\|u\|_{\dot{H}^\gamma_yH^s_x}^2=\int_{\R\times\R^{d-1}}\<\xi\>^{2s}|\eta|^{2\gamma}|\widehat{u}(\xi,\eta)|^2d\xi
d\eta.$$
\begin{lemma}\label{lem2}
If $\eps\in (0,1/2)$, $s=\frac{1}{2}+\eps$ and
$\gamma=\frac{1}{2}-\eps$, then 
$$\|u\|_{\dot{H}^{\gamma}_yH^{s}_x}\le
\|u\|_{{H}^{\gamma}_yH^{s}_x}\lesssim\|u\|_{Z},\quad \forall 
u\in Z.$$
\end{lemma}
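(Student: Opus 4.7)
\medskip

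The plan is to reduce everything to an inhomogeneous bound and then invoke Lemma~\ref{lem-emb-bm-hm}. The first inequality $\|u\|_{\dot H^\gamma_yH^s_x}\le\|u\|_{H^\gamma_yH^s_x}$ is trivial: since $|\eta|^{2\gamma}\le\<\eta\>^{2\gamma}$ pointwise, Plancherel yields it immediately.

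For the second, non-trivial, inequality the key observation is that $s+\gamma=1$ with $s,\gamma\in(0,1)$. This is precisely the exponent relation that makes the weight $\<\xi\>^{2s}\<\eta\>^{2\gamma}$ controllable by the sum $\<\xi\>^{2}+\<\eta\>^{2}$ via Young's inequality with conjugate exponents $p=1/s$ and $q=1/\gamma$ (note $1/p+1/q=s+\gamma=1$). Applied pointwise in $(\xi,\eta)$ this gives
\begin{equation*}
\<\xi\>^{2s}\<\eta\>^{2\gamma}\le s\<\xi\>^2+\gamma\<\eta\>^2\le \<\xi\>^2+\<\eta\>^2=2+\xi^2+|\eta|^2.
\end{equation*}

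Multiplying by $|\widehat u(\xi,\eta)|^2$, integrating over $\R\times\R^{d-1}$ and using Plancherel in both variables then yields
\begin{equation*}
\|u\|_{H^\gamma_yH^s_x}^2\le 2\|u\|_{L^2_{xy}}^2+\|\d_x u\|_{L^2_{xy}}^2+\|\nabla_y u\|_{L^2_{xy}}^2.
\end{equation*}

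The last step is to absorb the $\d_x$-derivative using the continuous embedding $B_x\hookrightarrow H^1_x$ from Lemma~\ref{lem-emb-bm-hm}: applying it for each fixed $y$ and integrating over $y\in\R^{d-1}$ gives $\|\d_x u\|_{L^2_{xy}}^2\lesssim\|u\|_{L^2_yB_x}^2=\|u\|_{L^2_{xy}}^2+\|M_x^{1/2}u\|_{L^2_{xy}}^2$. Combining this with the previous display produces exactly the three terms defining $\|u\|_Z^2$, which completes the proof. No step looks delicate here; the only thing one really needs to notice is the Young-inequality trick exploiting $s+\gamma=1$, which is what makes the choice of exponents in the statement natural.
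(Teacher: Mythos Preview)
Your proof is correct and follows essentially the same approach as the paper: both use Young's inequality exploiting $s+\gamma=1$ to bound $\<\xi\>^{2s}\<\eta\>^{2\gamma}$ by $\<\xi\>^2+\<\eta\>^2$, then invoke Lemma~\ref{lem-emb-bm-hm} to pass from $H^1_x$ to $B_x$. Your version is slightly more explicit about the conjugate exponents, but the argument is the same.
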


\begin{proof}
 From Young inequality and Lemma~\ref{lem-emb-bm-hm}, 
\begin{align*}
\|u\|_{{H}^{\gamma}_yH^{s}_x}^2&=\int_{\R\times\R^{d-1}}\<\xi\>^{2\gamma}
\<\eta\>^{2s}|\widehat{u}(\xi,\eta)|^2d\xi 
d\eta\\ 
& \lesssim 
\int_{\R\times\R^{d-1}}\left[(1+\xi^2)+(1+|\eta|^2)\right]|\widehat{u}(\xi,\eta)|^2d\xi
d\eta\ \lesssim\ \|u\|_{L^2_yH^1_x}^2+\|u\|_{L^2_x\dot{H}^1_y}^2,
\end{align*}
hence the result. 
\end{proof}
\subsection{Anisotropic Gagliardo-Nirenberg inequality}

\begin{proposition}\label{sob-anis}
Let $k,s,\gamma>0$  such that
\begin{equation}\label{ass-ksg}
s>1/2\quad \text{and} \quad \frac{1}{2}>
\frac{1}{k}>\frac{1}{2}-\frac{\gamma}{d-1}>0.
\end{equation}
Then $H^\gamma_yH^s_x\subset L^k_yL^\infty_x$, and there exists $C>0$
such that for every 
$u\in H^\gamma_yH^s_x$,
\begin{equation*}
  \|u\|_{L^k_yL^\infty_x}\le C
  \|u\|_{L^2_yH^{s}_x}^{1-\delta}\|u\|_{\dot{H}^{\gamma}_yH^{s}_x}^{\delta},
  \quad\text{where }\delta=\frac{d-1}{\gamma
  }\left(\frac{1}{2}-\frac{1}{k}\right). 
\end{equation*}
\end{proposition}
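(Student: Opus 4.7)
The plan is to split the two variables and chain together three classical inequalities: Sobolev in dimension one (in $x$), Minkowski's integral inequality to swap orders of integration, and Gagliardo--Nirenberg in dimension $d-1$ (in $y$).

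First, since $s>1/2$, the one-dimensional Sobolev embedding gives $H^s_x(\R)\hookrightarrow L^\infty_x(\R)$, so
\begin{equation*}
\|u(\cdot,y)\|_{L^\infty_x}\lesssim \|u(\cdot,y)\|_{H^s_x}=\|F(\cdot,y)\|_{L^2_x},
\end{equation*}
where $F=(1-\d_x^2)^{s/2}u$. Taking the $L^k_y$ norm of both sides yields $\|u\|_{L^k_yL^\infty_x}\lesssim \|F\|_{L^k_yL^2_x}$.

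Next, since the hypothesis $\frac 1 2 > \frac 1 k$ forces $k>2$, Minkowski's integral inequality applied to $|F|^2$ with exponent $k/2\ge 1$ allows us to exchange the order of norms:
\begin{equation*}
\|F\|_{L^k_yL^2_x}\le \|F\|_{L^2_xL^k_y}.
\end{equation*}
Now for each fixed $x$, the standard homogeneous Gagliardo--Nirenberg inequality on $\R^{d-1}$, valid precisely under the condition $\frac{1}{2}>\frac{1}{k}>\frac{1}{2}-\frac{\gamma}{d-1}$, gives
\begin{equation*}
\|F(x,\cdot)\|_{L^k_y}\lesssim \|F(x,\cdot)\|_{L^2_y}^{1-\delta}\|F(x,\cdot)\|_{\dot H^\gamma_y}^\delta,
\end{equation*}
with $\delta=\frac{d-1}{\gamma}\bigl(\frac{1}{2}-\frac{1}{k}\bigr)$ fixed by scaling.

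Finally, taking the $L^2_x$ norm and applying H\"older in $x$ with conjugate exponents $\frac{1}{1-\delta}$ and $\frac{1}{\delta}$ yields
\begin{equation*}
\|F\|_{L^2_xL^k_y}\lesssim \|F\|_{L^2_{xy}}^{1-\delta}\,\|F\|_{L^2_x\dot H^\gamma_y}^{\delta}
=\|u\|_{L^2_yH^s_x}^{1-\delta}\,\|u\|_{\dot H^\gamma_yH^s_x}^\delta,
\end{equation*}
which is the desired estimate. No step is a real obstacle; the mildly subtle point is checking that $k\ge 2$ (needed for Minkowski) and that the scaling exponent $\delta$ lies in $(0,1)$ (needed for the H\"older step and for Gagliardo--Nirenberg to apply), both of which follow directly from the two-sided assumption on $1/k$ in \eqref{ass-ksg}.
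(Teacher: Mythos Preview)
Your proof is correct and follows essentially the same route as the paper: Sobolev in $x$, Minkowski to swap the $x$ and $y$ integrations (using $k>2$), an $L^2$--$\dot H^\gamma$ interpolation in $y$, and H\"older in $x$ to separate the two factors. The only cosmetic difference is that the paper proves the $y$-variable Gagliardo--Nirenberg step by hand (Hausdorff--Young in $y$ followed by a low/high frequency split at scale $R$ and optimization in $R$), whereas you quote it as a standard inequality.
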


\begin{proof}
We first use the Sobolev inequality in the $x$ variable and Minkowski
inequality (which is possible because $k>2$). We get
\begin{equation}\label{23}
\|u\|_{L^k_yL^\infty_x}\lesssim
\|u\|_{L^k_yH^{s}_x}=\|\<\xi\>^{s}\mathcal{F}_xu(\xi,y)\|_{L^k_yL^2_\xi}
\lesssim\|\<\xi\>^s\mathcal{F}_xu(\xi,y)\|_{L^2_\xi   L^k_y},
\end{equation}
where $\mathcal{F}_x$ denotes the
Fourier transform in the $x$ variable. Similarly, we denote by $\mathcal{F}_y$  the
Fourier transform in $y$ and
$\widehat{u}(\xi,\eta)=(\mathcal{F}_x\mathcal{F}_yu)(\xi,\eta)$. Then
for a fixed value of $\xi\in\R$, Hausdorff-Young 
inequality yields
\begin{equation}\label{24}
\|\mathcal{F}_xu(\xi,y)\|_{L^k_y}\lesssim \|\widehat{u}(\xi,\eta)\|_{L^{k'}_\eta}.
\end{equation}
Omitting the dependence of the right hand side in $\xi$, let us denote by
$v(\eta)=\widehat{u}(\xi,\eta)$. It follows from
the triangle and H\"older inequality that for any $R>0$,
\begin{align}
\|v\|_{L^{k'}_\eta} &\le
\|v\|_{L^{k'}(|\eta|<R)}+\|v\|_{L^{k'}(|\eta|>R)}\nonumber\\
&\lesssim 
\|\mathbf{1}_{\{|\eta|<R\}}\|_{L^p(|\eta|<R)}\|v\|_{L^2_\eta}+\||\eta|^{-\gamma}\|_{L^p(|\eta|>R)}\||\eta|^{\gamma}v\|_{L^2_\eta}\nonumber\\
&\lesssim 
R^{(d-1)/p}\|v\|_{L^2_\eta}+R^{(d-1)/p-\gamma}\||\eta|^{\gamma}v\|_{L^2_\eta},\label{25}
\end{align}
where $p$ is given by $1/p=1/2-1/k$.. Note that (\ref{ass-ksg}) implies
that $\gamma p>d-1$, and 
therefore $|\eta|^{-\gamma}\in L^p(|\eta|>R)$. Optimizing in $R$ in the right hand side of \eqref{25}, we get
\begin{equation}\label{26}
\|v\|_{L^{k'}_\eta} \lesssim \|v\|_{L^2_\eta}^{1-\delta}\||\eta|^{\gamma}v\|_{L^2_\eta}^\delta,
\end{equation}
where $\delta=\frac{d-1}{\gamma p}\in (0,1)$. Combining \eqref{23},
\eqref{24} and \eqref{26}, H\"older inequality yields
\begin{align*}
\|u\|_{L^k_yL^\infty_x}&\lesssim\left(
\int \<\xi\>^{2s(1-\delta)}\|\widehat{u}\|_{L^2_\eta}^{2(1-\delta)}
\<\xi\>^{2s\delta}\||\eta|^{\gamma}\widehat{u}\|_{L^2_\eta}^{2\delta}d\xi\right)^{1/2}\\ 
&\lesssim \left(
\int
  \<\xi\>^{2s}\|\widehat{u}\|_{L^2_\eta}^{2}d\xi\right)^{(1-\delta)/2}
\left(\int \<\xi\>^{2s}\||\eta|^{\gamma}\widehat{u}\|_{L^2_\eta}^{2}
d\xi\right)^{\delta/2}\\
&=\|u\|_{L^2_yH^{s}_x}^{1-\delta}\|u\|_{\dot{H}^{\gamma}_yH^{s}_x}^{\delta}.
\end{align*}
\end{proof}

\begin{corollary}\label{coro}
Let $2<k<\frac{2(d-1)}{(d-2)_+}$. Then $Z$ is continuously embedded in $L^k_yL^\infty_x$.
\end{corollary}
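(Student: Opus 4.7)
The plan is to obtain the embedding as a direct consequence of Proposition~\ref{sob-anis} combined with Lemma~\ref{lem2}, by choosing the parameters $s$ and $\gamma$ compatibly with both results. Since Lemma~\ref{lem2} forces the choice $s=1/2+\eps$ and $\gamma=1/2-\eps$ for some $\eps\in(0,1/2)$, the only real task is to check that such an $\eps$ can be picked so that the assumption \eqref{ass-ksg} of Proposition~\ref{sob-anis} is satisfied for the given $k$.

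First, I would unpack condition \eqref{ass-ksg}: the inequality $\frac{1}{k}>\frac{1}{2}-\frac{\gamma}{d-1}$ is equivalent to $\gamma>\frac{(d-1)(k-2)}{2k}$, while $\frac{1}{2}-\frac{\gamma}{d-1}>0$ is equivalent to $\gamma<\frac{d-1}{2}$. With $\gamma=\frac12-\eps$ the second inequality is automatic (since $d\ge 2$), and the first requires
\begin{equation*}
\tfrac12-\eps\ >\ \frac{(d-1)(k-2)}{2k},
\end{equation*}
which can be achieved by some $\eps\in(0,1/2)$ if and only if $\frac{(d-1)(k-2)}{2k}<\frac12$, i.e.\ $(d-2)k<2(d-1)$. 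This is exactly the upper bound $k<\frac{2(d-1)}{(d-2)_+}$ in the hypothesis (for $d=2$ it holds trivially, for $d\ge 3$ it is the restriction on $k$). The remaining requirements $s>1/2$ and $\frac{1}{2}>\frac{1}{k}$ are immediate from $s=1/2+\eps$ and $k>2$.

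Fixing such an $\eps$, Proposition~\ref{sob-anis} yields
\begin{equation*}
\|u\|_{L^k_yL^\infty_x}\ \lesssim\ \|u\|_{L^2_yH^{s}_x}^{1-\delta}\|u\|_{\dot{H}^{\gamma}_yH^{s}_x}^{\delta}\ \lesssim\ \|u\|_{H^{\gamma}_yH^{s}_x},
\end{equation*}
and since this pair $(s,\gamma)$ is precisely the one treated in Lemma~\ref{lem2}, the right-hand side is controlled by $\|u\|_Z$, which gives the claimed continuous embedding.

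There is essentially no obstacle beyond the algebraic check that the range of admissible $k$ matches the interval of $\gamma$ values permitted by Lemma~\ref{lem2}; the sharpness of the upper bound $k<\frac{2(d-1)}{(d-2)_+}$ comes exactly from the constraint $\gamma<1/2$ imposed by the $H^1_x$-type regularity encoded in $Z$.
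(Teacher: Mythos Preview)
Your proof is correct and follows essentially the same approach as the paper: pick $\eps\in(0,1/2)$ small enough so that $(s,\gamma)=(1/2+\eps,1/2-\eps)$ satisfies \eqref{ass-ksg} for the given $k$, then combine Proposition~\ref{sob-anis} with Lemma~\ref{lem2}. The only cosmetic difference is that the paper writes the intermediate bound as $\|u\|_{L^2_yB_x}^{1-\delta}\|u\|_Z^{\delta}$ (invoking Lemma~\ref{lem-emb-bm-hm} explicitly for the first factor), whereas you pass through $\|u\|_{H^\gamma_yH^s_x}$ and rely on Lemma~\ref{lem2}, which already incorporates that embedding.
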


\begin{proof}
Pick $\eps>0$ small enough such that
$$\frac{1}{2}-\frac{1/2-\eps}{d-1}=\frac{d-2}{2(d-1)}+\frac{\eps}{d-1}<\frac{1}{k}.$$
Then $(s,\gamma)=(1/2+\eps,1/2-\eps)$ satisfy the assumptions of Proposition  \ref{sob-anis} and Lemma \ref{lem2}. Thus, using also Lemma \ref{lem-emb-bm-hm}, 
\begin{equation}
\|u\|_{L^k_y L^\infty_x}\lesssim
\|u\|_{L^2_yB_x}^{1-\delta}
\|u\|_Z^{\delta}\lesssim\|u\|_Z.\nonumber
\end{equation}
\end{proof}

\subsection{Strichartz estimates}

Following the idea from \cite{TzVi12}, with the generalization from
\cite{AnCaSi-p} (noticing that the spectral decomposition from the
proof in \cite{TzVi12} is not needed), we have, since $M_x$ commutes
with $H$:
\begin{proposition}\label{prop:strichartz}
Let $d\ge 2$. We have
\begin{equation*}
\|e^{-itH}u_0\|_{L^q_tL^r_yL^2_x}+
\left\|\int_0^te^{-i(t-s)H}F(s)ds\right\|_{L^{q_1}_tL^{r_1}_yL^2_x}\lesssim 
\|u_0\|_{L^2_{y}L^2_x}+
\|F\|_{L^{q_2'}_tL^{r_2'}_yL^2_x},
\end{equation*}
provided that the pairs are $(d-1)$-admissible, that is
\begin{equation*}
  \frac{2}{q}+\frac{d-1}{r}=  \frac{2}{q_1}+\frac{d-1}{r_1}=
  \frac{2}{q_2}+\frac{d-1}{r_2}=\frac{d-1}{2}, 
\end{equation*}
 with $(q,r)\neq (2,\infty)$
if $d=3$.
\end{proposition}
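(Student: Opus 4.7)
The plan is to reduce this to the classical Strichartz estimates for the free Schr\"odinger group on $\R^{d-1}$, transferred to $L^2_x$-valued functions of $y$. The starting point is the orthogonal splitting $H=H_x+H_y$ with $H_x=-\frac12\partial_x^2+V(x)$ acting only in $x$ and $H_y=-\frac12\Delta_y$ acting only in $y$. Since these self-adjoint operators act on disjoint variables, they commute, and functional calculus (equivalently, the tensor product structure $L^2_{xy}=L^2_x\otimes L^2_y$) gives $e^{-itH}=e^{-itH_x}\,e^{-itH_y}$. The operator $e^{-itH_x}$ is unitary on $L^2_x$ for every $t$ and acts as the identity in $y$, so for a.e.\ $(t,y)$ one has
\[
\|e^{-itH}u_0(t,\cdot,y)\|_{L^2_x}=\|e^{-itH_y}u_0(t,\cdot,y)\|_{L^2_x},
\]
and consequently $\|e^{-itH}u_0\|_{L^q_tL^r_yL^2_x}=\|e^{-itH_y}u_0\|_{L^q_tL^r_yL^2_x}$. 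The problem is thus reduced to Strichartz for the free group $e^{-itH_y}$ on $\R^{d-1}$, but with $L^2_x$ as the target Hilbert space.

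The next step is to establish the vector-valued dispersive bound
\[
\|e^{-itH_y}f\|_{L^\infty_yL^2_x}\lesssim|t|^{-(d-1)/2}\|f\|_{L^1_yL^2_x},
\]
which I would obtain by writing $e^{-itH_y}$ as convolution with the explicit Gaussian kernel in $y$ and applying Minkowski's integral inequality in the $L^2_x$-norm (bringing that norm under the convolution integral). Together with the trivial mass conservation $\|e^{-itH_y}f\|_{L^2_yL^2_x}=\|f\|_{L^2_yL^2_x}$, this places us in exactly the setting of the Keel--Tao $TT^*$ argument, which transfers verbatim to the Hilbert-valued case since it only uses the operator norms of the dispersive and mass bounds. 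This produces the full range of $(d-1)$-admissible homogeneous and inhomogeneous Strichartz estimates for $e^{-itH_y}$ on $L^2_x$-valued functions, with the customary exclusion of $(q,r)=(2,\infty)$ when $d-1=2$, that is $d=3$.

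For the inhomogeneous term, the same commutation trick applies. Writing $e^{-i(t-s)H_x}=e^{-itH_x}e^{isH_x}$ and commuting $e^{isH_x}$ through the $H_y$-evolution (they act on disjoint variables), one obtains
\[
\int_0^te^{-i(t-s)H}F(s)\,ds=e^{-itH_x}\int_0^te^{-i(t-s)H_y}G(s)\,ds,\qquad G(s):=e^{isH_x}F(s).
\]
The outer $e^{-itH_x}$ disappears under the $L^2_x$ norm, and since $e^{isH_x}$ is unitary on $L^2_x$ we have $\|G\|_{L^{q_2'}_tL^{r_2'}_yL^2_x}=\|F\|_{L^{q_2'}_tL^{r_2'}_yL^2_x}$. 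Applying the vector-valued inhomogeneous Strichartz for $e^{-itH_y}$ then closes the estimate. The only step genuinely requiring care is checking that the Keel--Tao scheme survives in the $L^2_x$-valued setting, but this is automatic; the point of the present approach, as announced before the proposition, is that one bypasses any spectral decomposition of $H_x$ (as was needed in \cite{TzVi12}) and works directly with the unitarity of $e^{-itH_x}$ on $L^2_x$.
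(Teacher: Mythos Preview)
Your argument is correct and is exactly the approach the paper has in mind: the paper does not spell out a proof but merely records that the commutation of $M_x$ (equivalently $H_x$) with $H$ allows one to factor $e^{-itH}=e^{-itH_x}e^{-itH_y}$, strip off the $L^2_x$-unitary $e^{-itH_x}$, and invoke the $(d-1)$-dimensional Keel--Tao estimates for $e^{-itH_y}$ with values in $L^2_x$, bypassing the spectral decomposition used in \cite{TzVi12}. Aside from a harmless notational slip (your $u_0(t,\cdot,y)$ should read $(e^{-itH}u_0)(\cdot,y)$, since $u_0$ is time-independent), the write-up is a faithful fleshing-out of the paper's one-line justification.
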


\subsection{Vectorfields}

We introduce the notation
\begin{align*}
   &A_0(t)=A_0={\rm Id}, \quad A_1(t)=A_1=M^{1/2}_x, \quad  A_2(t)=A_2=\nabla_y,\\
  &
A_3(t)=y+it\nabla_y=it e^{i|y|^2/(2t)}\nabla_y\(\cdot \ e^{-i|y|^2/(2t)}\)=e^{-itH}ye^{itH}. 
\end{align*}
The operator $A_3$ is the standard Galilean operator on $\R^{d-1}$,
see e.g. \cite{CazCourant}, so the last identity stems from the fact
that $e^{-itM_x}$ commutes with both $e^{i\frac{t}{2}\Delta_y}$ and
$y$. We readily have:
\begin{lemma}\label{lem:opA}
  The operators $A_j$ satisfy the following properties:
  \begin{itemize}
  \item Commutation: for $j\in \{0,\dots,3\}$, $[i\partial_t-H,A_j]=0$.
\item Action on the nonlinearity: for all $j\in \{0,\dots,3\}$,
  \begin{equation*}
    \left\|A_j\(|u|^{2\si}u\)\right\|_{L^2_x}\lesssim
    \|u\|_{L^\infty_x}^{2\si}\|A_ju\|_{L^2_x}. 
  \end{equation*}
\item Equivalence of norms: for all $u\in C_0^\infty(\R^d)$, we have,
  uniformly in $t\in \R$, 
  \begin{equation}\label{eq:equiv-norm}
    \| e^{itH} u\|_{Z}= \|u\|_Z\approx
 \sum_{j=0}^2\|A_ju\|_{L^2_{xy}},\quad \| e^{itH} u\|_{\tilde Z}\approx
 \sum_{j=0}^3\|A_j(t)u\|_{L^2_{xy}} .
  \end{equation}
\item Gagliardo-Nirenberg inequalities: for all $g\in
  \Sigma_y$, $2\le p< \frac{2}{(d-3)_+}$,
  \begin{align*}
    &\| g\|_{L^p_y}\le C \|g\|_{L^2}^{1-\delta}\|A_2
      g\|_{L^2_y}^\delta,\\
&\| g\|_{L^p_y}\le \frac{C}{|t|^{\delta}} \|g\|_{L^2}^{1-\delta}\|A_3(t)
      g\|_{L^2_y}^\delta,\quad t\not =0,
  \end{align*}
where $C$ is independent of $t$, and
$\delta=(d-1)\(\frac{1}{2}-\frac{1}{p}\)$. 
  \end{itemize}
\end{lemma}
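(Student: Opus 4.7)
\smallbreak
\emph{Commutation.} For $j=0$ this is immediate. For $A_1=M_x^{1/2}$, I would decompose $H=M_x-C_0-\frac{1}{2}\Delta_y$; since $M_x$ acts only on $x$ and $\Delta_y$ only on $y$, these two operators commute, so the functional calculus gives $[M_x^{1/2},H]=0$, and since $A_1$ is $t$-independent the claim follows. The case $A_2=\nabla_y$ is analogous: $\nabla_y$ commutes with $V(x)$, $\d_x^2$ and $\Delta_y$. For $A_3(t)$, I would differentiate the identity $A_3(t)=e^{-itH}y\,e^{itH}$ in $t$, obtaining $\d_tA_3(t)=i[A_3(t),H]$, which is exactly $[i\d_t-H,A_3]=0$.

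\smallbreak
\emph{Action on the nonlinearity.} The cases $j=0$ (H\"older) and $j=2$ (pointwise chain rule $|\nabla_y(|u|^{2\si}u)|\le(2\si+1)|u|^{2\si}|\nabla_y u|$) are routine. The case $j=3$ will reduce to $j=2$ by using the factorization $A_3(t)v=ite^{i|y|^2/(2t)}\nabla_y\bigl(v\,e^{-i|y|^2/(2t)}\bigr)$: setting $\tilde u=ue^{-i|y|^2/(2t)}$, one has $|\tilde u|=|u|$ pointwise and $|A_3u|=|t||\nabla_y\tilde u|$, so the estimate on $|u|^{2\si}u$ becomes one on $|\tilde u|^{2\si}\tilde u$. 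The main obstacle is $j=1$, since $M_x^{1/2}$ is a non-local operator defined through the spectral theorem and has no explicit kernel; I plan to bypass this via the quadratic-form identity
\[
\|M_x^{1/2}f\|_{L^2_x}^2=\<M_xf,f\>=\tfrac{1}{2}\|\d_xf\|_{L^2_x}^2+\int_\R(V(x)+C_0)|f(x)|^2\,dx,
\]
applied to $f=|u|^{2\si}u$: the first term is handled by $|\d_x(|u|^{2\si}u)|\le(2\si+1)|u|^{2\si}|\d_xu|$, the second by $|u|^{4\si+2}\le\|u\|_{L^\infty_x}^{4\si}|u|^2$, producing the common prefactor $\|u\|_{L^\infty_x}^{4\si}$ in front of $\|M_x^{1/2}u\|_{L^2_x}^2$.

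\smallbreak
\emph{Equivalence of norms.} Directly from the definition, $\|u\|_Z^2=\sum_{j=0}^2\|A_ju\|_{L^2_{xy}}^2$, and the $\ell^2/\ell^1$ equivalence on three terms yields $\|u\|_Z\approx\sum_{j=0}^2\|A_ju\|_{L^2_{xy}}$. Since $e^{-itH}$ is $L^2$-unitary and (by the commutation step) commutes with $A_0,A_1,A_2$, one has $\|A_je^{itH}u\|_{L^2_{xy}}=\|A_ju\|_{L^2_{xy}}$ for $j=0,1,2$, whence $\|e^{itH}u\|_Z=\|u\|_Z$. For $\tilde Z$, the key identity is $\|A_3(t)u\|_{L^2_{xy}}=\|ye^{itH}u\|_{L^2_{xy}}$, obtained by applying unitarity to $A_3(t)=e^{-itH}ye^{itH}$; adding this to the three $Z$-contributions gives $\|e^{itH}u\|_{\tilde Z}^2=\sum_{j=0}^3\|A_j(t)u\|_{L^2_{xy}}^2$. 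This also makes transparent why $e^{-itH}$ is not unitary on $\tilde Z$: the $y$-direction there is measured by the $t$-dependent operator $A_3(t)$, not by $y$ alone.

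\smallbreak
\emph{Gagliardo-Nirenberg.} The first inequality is the standard homogeneous Gagliardo-Nirenberg estimate on $\R^{d-1}$ with the stated exponent $\delta=(d-1)(\frac{1}{2}-\frac{1}{p})$ and admissible range $2\le p<2/(d-3)_+$. For the second, I would apply the first to $\tilde g=e^{-i|y|^2/(2t)}g$, exploiting that $|\tilde g|=|g|$ pointwise (so the $L^p_y$ and $L^2_y$ norms coincide) while $\nabla_y\tilde g=(it)^{-1}e^{-i|y|^2/(2t)}A_3(t)g$, so that $\|\nabla_y\tilde g\|_{L^2_y}=|t|^{-1}\|A_3(t)g\|_{L^2_y}$. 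Raising to the power $\delta$ then produces the factor $|t|^{-\delta}$ automatically.
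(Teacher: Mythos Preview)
Your proof is correct and follows essentially the same route as the paper's: the commutation is declared straightforward there (you simply spell it out), the $A_1$ nonlinear estimate is handled via the identical quadratic-form identity $\|M_x^{1/2}f\|_{L^2_x}^2=\<M_xf,f\>$, the norm equivalences come from unitarity plus $A_3(t)=e^{-itH}ye^{itH}$, and the weighted Gagliardo--Nirenberg inequality is obtained by applying the classical one to $e^{-i|y|^2/(2t)}g$. No step differs in substance from the paper's argument.
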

\begin{proof}
  The commutation property is straightforward. For the action on the
  nonlinearity, it is trivial in the case of $A_0$ and $A_2$. For
  $A_3$, it stems classically from the fact that $A_3$ is the gradient in $y$
  conjugated by an exponential of modulus one and that the
  nonlinearity we consider is gauge invariant. Concerning $A_1$, we
  compute
  \begin{align*}
    \|M_x^{1/2}\left(|u|^{2\sigma}u\right)\|_{L^2_x}^2 & = \<M_x
                                                         \(|u|^{2\si}u\),|u|^{2\si}u\>\\
&=\frac{1}{2}\|\partial_x\left(|u|^{2\sigma}u\right)\|_{L^2_x}^2+\int_{-\infty}^{+\infty}\(V(x)+C_0\)|u|^{4\sigma+2}dx\\
&\le 
(2\sigma+1)^2\|u\|_{L^\infty_x}^{4\sigma}\left(\frac{1}{2}\|\partial_x
  u\|_{L^2_x}^2+\int_{-\infty}^{+\infty}\(V(x)+C_0\)|u|^{2}dx\right)\\
& = (2\sigma+1)^2\|u\|_{L^\infty_x}^{4\sigma}\|M_x^{1/2}u\|_{L^2_x}^2.
  \end{align*}
Recall that $A_0,A_1$ and $A_2$
commute with $e^{itH}$, which is unitary on
$L^2(\R^d)$, hence the first equivalence of norms. The identity 
$A_3(t) = e^{-itH}y e^{itH}$ yields the second equivalence of norms,
uniformly in time: note that $\|e^{itH}u\|_{\tilde Z}$ is equivalent
to $\|u\|_{\tilde Z}$ only locally in time, due to the factor $t$ in
the identity $A_3(t) =y+it\nabla_y$. 
\smallbreak

Finally, the Gagliardo-Nirenberg inequalities stated in the lemma are
the classical ones, using once more the factorization of $A_3$. 
\end{proof}

\section{Cauchy problem}
\label{sec:cauchy}

In this section, we prove Theorem~\ref{theo:cauchy}. The existence
part relies on a a standard fixed point argument, adapted to the
present framework. Since the problem is invariant by translation in
time, we may assume $t_0=0$. Duhamel's formula reads
\begin{equation*}
  u(t)=e^{-itH}u_0-i\int_{0}^te^{-i(t-s)H}\left(|u|^{2\sigma}u\right)(s)ds=:\Phi(u)(t).
\end{equation*}
This Cauchy problem will be solved thanks to a fixed point argument in
a ball of the Banach space
\begin{equation*}
  Z_T=\{u\in L^\infty([0,T];Z),\quad A_ju\in
  L^q\([0,T];L^r_yL^2_x\),
  \forall j\in \{0,1,2\}\},
\end{equation*}
where $(q,r)$ is a $(d-1)$-admissible
pair that will be fixed later. The space $Z_T$ is naturally equipped
with the norm
\begin{align*}
  \|u\|_{Z_T}= \sum_{j=0}^2 \(\|A_ju\|_{L^\infty_T
  L^2_{xy}}+\|A_ju\|_{L^q_TL^r_yL^2_x}\) . 
\end{align*}
Denote
$L^a_TX=L^a([0,T];X)$. Proposition~\ref{prop:strichartz} and 
the first point of Lemma~\ref{lem:opA} imply, for $ j\in \{0,1,2\}$:
\begin{equation*}
  \|A_j\Phi(u)\|_{L^\infty_TL^2_{xy}} + \|A_j\Phi(u)\|_{L^q_TL^r_yL^2_x}\lesssim \|A_j
u_0\|_{L^2_{xy}}+\|A_j(|u|^{2\sigma}u)\|_{L^{q'}_TL^{r'}_yL^2_x}.
\end{equation*}
The second point of  Lemma~\ref{lem:opA} and H\"older inequality yield
\begin{equation*}
 \|A_j(|u|^{2\sigma}u)\|_{L^{q'}_TL^{r'}_yL^2_x} \lesssim
 \|u\|_{L^\theta_TL^k_yL^\infty_x}^{2\sigma}\|A_j 
u\|_{L^{q}_TL^{r}_yL^2_x},
\end{equation*}
where $\theta$ and $k$ are given by
\begin{equation}\label{def-theta-k}
\frac{1}{q'}=\frac{2\sigma}{\theta}+\frac{1}{q},\quad
\frac{1}{r'}=\frac{2\sigma}{k}+\frac{1}{r}.
\end{equation}
We infer
\begin{equation}
\label{phi0ZT}
\|\Phi(u)\|_{Z_T}\lesssim
\|u_0\|_Z+\|u\|_{L^\theta_TL^k_yL^\infty_x}^{2\sigma}\|u\|_{Z_T}. 
\end{equation}
 Let us now explain how the parameters $q,r,\theta,k$ are
chosen.
\smallbreak

\noindent {\bf Case $d=2$.} We choose $r\in(2,\infty)$ if $\sigma\ge
1$, $2<r<\frac{2}{1-\si} $ if $0<\sigma<1$, and $(q,r)$ the corresponding
$1$-admissible pair. Then, \eqref{def-theta-k} defines a number $k$
that 
 belongs to $(2,\infty)$
.
\smallbreak

\noindent {\bf Case $d=3$.} $(q,r)$ is a
$2$-admissible pair with $r\in (2,\infty)$ such
that 
$$\frac{1}{4}<\frac{1}{2\sigma}\left(1-\frac{2}{r}\right)=:\frac{1}{k}<\frac{1}{2}.$$
Note that this is made possible thanks to the assumption
$\sigma<2$. 
\smallbreak

\noindent {\bf Case $d\ge 4$.} As $(q,r)$ describes the set of all
$(d-1)$-admissible pairs, 
$r$ varies between the two extremal values $2$ and $\frac{2(d-1)}{d-3}$, and
therefore $\frac{1}{2\sigma}(1-\frac{2}{r})$ varies between $0$ and
$\frac{1}{\sigma(d-1)}$, where the latter number is larger than
$\frac{d-2}{2(d-1)}$ thanks  to the assumption 
$\sigma<2/(d-2)$. Thus, one can choose $2<r<\frac{2(d-1)}{d-3}$ such
that if $k$ is defined by 
\eqref{def-theta-k}, 
$$\frac{d-2}{2(d-1)}<\frac{1}{k}<\frac{1}{2}.$$ 

For these choices of the 
parameters, Corollary \ref{coro} and
H\"older inequality in time imply
\begin{equation}\label{l-theta-k}
\|u\|_{L^\theta_TL^k_yL^\infty_x}\lesssim\|u\|_{L^\theta_TZ}
\lesssim T^{1/\theta}\|u\|_{Z_T}.
\end{equation}
Note that we have chosen admissible pairs such that $q>2$. Thus, since
$\theta$ is defined by \eqref{def-theta-k}, $1/\theta>0$. From the
combination of \eqref{phi0ZT} and \eqref{l-theta-k}, we deduce that if
$u$ belongs to the ball $B(R,Z_T)$ of $Z_T$ with radius $R>0$ centered at the
origin, we have
\begin{equation}\label{stab_ball}
\|\Phi(u)\|_{Z_T}\le
C_1\|u_0\|_Z+CT^{2\sigma/\theta}R^{2\sigma+1}.
\end{equation}
Chosing $R=2C_1\|u_0\|_Z$ and $T=T(\|u_0\|_Z)>0$ sufficiently
small, $B(R,Z_T)$ is 
stable by $\Phi$. Then, we note that $B(R,Z_T)$ endowed with the
norm 
$$\|u\|_{B(R,Z_T)}=\|u\|_{L^\infty_TL^2_{xy}}+
\|u\|_{L^q_TL^r_yL^2_x}$$ 
is a complete metric space (Kato's method, see
e.g. \cite{CazCourant}). 
For $u_2,u_1\in B(R,Z_T)$, the same estimates as above yield
\begin{align*}
  \|\Phi(u_2) -\Phi( u_1)\|_{L^\infty_T L^2_{xy}}+&\|\Phi(u_2) - \Phi(u_1)\|_{L^q_T
                                        L^r_yL^2_x}\\
&\lesssim
  \(\|u_2\|_{L^\theta_T L^k_yL^\infty_x}^{2\si}+
  \|u_1\|_{L^\theta_TL^k_yL^\infty_x}^{2\si}\)\|u_2-u_1\|_{L^q_T
  L^r_yL^2_x}\\
&\lesssim T^{2\si/\theta} \( \|u_2\|_{Z_T}^{2\si} +
  \|u_1\|_{Z_T}^{2\si} \)
\|u_2-u_1\|_{L^q_T
  L^r_yL^2_x} \\
&\lesssim T^{2\si/\theta} R^{2\si}
\|u_2-u_1\|_{L^q_T
  L^r_yL^2_x} .
\end{align*}
Therefore, $\Phi$ is a
contraction on $B(R,Z_T)$ endowed with the above norm, provided that 
$T=T(\|u_0\|_Z)$ is 
sufficiently small, hence the existence of a local solution in $Z$. 
\smallbreak

The conservation of mass and energy follows from standard arguments
(see e.g. \cite{CazCourant}). Under Assumption~\ref{hyp:V}, this
implies an a priori bound for $\|u(t)\|_Z$, and so the solution $u$ is
global in time, $u\in L^\infty(\R;Z)$. 
\smallbreak

Unconditional uniqueness as stated in Theorem~\ref{theo:cauchy}
follows from the same approach as in \cite{TzVi-p}. If $u_1,u_2\in
C([0,T];Z)$ are two solutions of \eqref{eq:nls} with the same initial
datum, then
\begin{equation*}
  u_2(t)-u_1(t) = -i\int_0^t e^{-i(t-s)H}
  \(|u_2|^{2\si}u_2-|u_1|^{2\si}u_1\)(s)ds. 
\end{equation*}
Resuming the same estimates as above, we now have, for
$0<\tau\le T$:
\begin{align*}
  \|u_2 - u_1\|_{L^q_\tau L^r_yL^2_x}&\lesssim
  \(\|u_2\|_{L^\theta_\tau L^k_yL^\infty_x}^{2\si}+
  \|u_1\|_{L^\theta_\tau
                                       L^k_yL^\infty_x}^{2\si}\)\|u_2-u_1\|_{L^q_\tau L^r_yL^2_x}\\
&\lesssim  \tau^{2\si/\theta} \( \|u_2\|_{Z_T}^{2\si} +
  \|u_1\|_{Z_T}^{2\si} \)
\|u_2-u_1\|_{L^q_\tau
  L^r_yL^2_x},
\end{align*}
and uniqueness follows by taking $\tau>0$ sufficiently small. 
\smallbreak

To complete the proof of Theorem~\ref{theo:cauchy}, we just have to
check that the extra regularity $u_0\in \tilde Z$ is propagated by the
flow.  To do so, it suffices to replace the space $Z_T$
 with
\begin{equation*}
  \tilde Z_T=\{u\in L^\infty((0,T),Z),\quad A_j(t)u\in
  L^q\((0,T);L^r_yL^2_x\),
  \forall j\in \{0,1,2,3\}\},
\end{equation*}
that is, to add the field $A_3$. The second point of
Lemma~\ref{lem:opA},  and the above computations then yield
\begin{align*}
  \|A_3\Phi(u)\|_{L^\infty_TL^2_{xy}}+
  \|A_3\Phi(u)\|_{L^q_TL^r_yL^2_{x}}&\lesssim \|yu_0\|_{L^2_{xy}}+
  \|u\|_{L^\theta_T
  L^k_yL^\infty_x}^{2\si}\|A_3u\|_{L^q_TL^r_yL^2_{x}}\\
&\lesssim \|yu_0\|_{L^2_{xy}}+
  T^{2\si/\theta}\|u\|_{Z_T}^{2\si}\|A_3u\|_{L^q_TL^r_yL^2_{x}}.
\end{align*}
The above fixed point argument can then be resumed: we construct a
local solution in $\tilde Z$, $u\in C([-T,T];\tilde Z)\cap L^\infty(\R;Z)$. The latest
property and the previous estimate show that $A_3u\in C(\R;L^2_{xy})$
is global in time.

\section{Existence of wave operators}
\label{sec:wave}

To prove the existence of wave operators, we construct a fixed point
for the related Duhamel's formula,
\begin{equation}\label{eq:duhamel-}
  u(t)=e^{-itH}u_--i\int_{-\infty}^t e^{-i(t-s)H}\(|u|^{2\si}u\)(s)ds
  =:\Phi_-(u)(t),
\end{equation}
on some time interval $(-\infty,-T]$ for $T$ possibly very large but
finite. According to the regularity assumption on $u_-$, we construct
a solution in $Z$ or in $\tilde Z$. This solution is actually global
in time from either case of Theorem~\ref{theo:cauchy}. We therefore
focus on the construction of a fixed point for $\Phi_-$, as well as on
uniqueness. In a similar fashion as in Section~\ref{sec:cauchy}, we
denote $L_T^aX = L^a((-\infty,-T];X)$. 

\subsection{Wave operators in $Z$}

 Resume the  $(d-1)$-admissible pair $(q,r)$ used in
 Section~\ref{sec:cauchy}, and
 $(\theta,k)$  given by \eqref{def-theta-k}. 
 For $(q_1,r_1)$ a $(d-1)$-admissible pair,  and $j\in
 \{0,1,2\}$, Strichartz estimates and H\"older 
inequality yield:
\begin{align*}
  \left\| A_j\Phi_-(u)\right\|_{L^{q_1}_T L^{r_1}_yL^2_x}
  &\lesssim\|A_ju_-\|_{L^2_{xy}} + 
\left\| A_j\(|u|^{2\si}u\)\right\|_{L^{q'}_TL^{r'}_y L^2_x}
  \\
 &\lesssim\|A_ju_-\|_{L^2_{xy}} + 
\|u\|_{L^{\theta}_TL^{k}_y L^\infty_x}^{2\si}\| A_j u\|_{L^{q}_TL^{r}_y L^2_x}.
\end{align*}
By construction,
\begin{equation*}
  2\le k<\frac{2(d-1)}{(d-2)_+}< \frac{2(d-1)}{(d-3)_+},
\end{equation*}
so we can find $p$ such that $(p,k)$ is $(d-1)$-admissible. Putting
the definition of admissible pairs and \eqref{def-theta-k} together,
we get
\begin{equation*}
  1-\frac{2\si}{\theta} =\frac{2}{q}= (d-1)\(\frac{1}{2}-\frac{1}{r}\)
  = \frac{(d-1)\si}{k} = \si\(\frac{d-1}{2}-\frac{2}{p}\).
\end{equation*}
By assumption, $\si\ge \frac{2}{d-1}$, so $p\le \theta$, and there
exists $\beta\in (0,1]$ such that
\begin{equation*}
  \|u\|_{L^{\theta}_TL^{k}_y L^\infty_x}\le \|u\|_{L^{p}_TL^{k}_y
    L^\infty_x}^\beta
\|u\|_{L^{\infty}_TL^{k}_y L^\infty_x}^{1-\beta}. 
\end{equation*}
Corollary~\ref{coro} implies
\begin{equation*}
  \left\| A_j\Phi_-(u)\right\|_{L^{q_1}_T L^{r_1}_yL^2_x}
 \lesssim\|A_ju_-\|_{L^2_{xy}} + 
\|u\|_{L^{p}_TL^{k}_y
    L^\infty_x}^{2\si\beta}
\|u\|_{L^{\infty}_TZ}^{2\si(1-\beta)}\| A_j u\|_{L^{q}_TL^{r}_y L^2_x}.
\end{equation*}
Now the one-dimensional Gagliardo-Nirenberg inequality
\begin{equation*}
  \|f\|_{L^\infty_x}\le \sqrt 2 \|f\|_{L^2_x}^{1/2}\|\d_x f\|_{L^2_x}^{1/2}
\end{equation*}
and according to the proof of Lemma~\ref{lem-emb-bm-hm}, we have
\begin{equation}\label{eq:waveH1}
\begin{aligned}
  \left\| A_j\Phi_-(u)\right\|_{L^{q_1}_T L^{r_1}_yL^2_x}
 &\le C\|A_ju_-\|_{L^2_{xy}}  \\
&+
C\|u\|_{L^{p}_TL^{k}_y
    L^2_x}^{\si\beta}\|A_1u\|_{L^{p}_TL^{k}_y
    L^2_x}^{\si\beta}
\|u\|_{L^{\infty}_TZ}^{2\si(1-\beta)}\| A_j u\|_{L^{q}_TL^{r}_y L^2_x}.
\end{aligned}
\end{equation}
for $C$ sufficiently large. We can now define
  \begin{align*}
    B_T:=\Big\{ & u\in C(]-\infty,-T];Z),\\
&\left\|
   A_j u\right\|_{L^q_TL^r_yL^2_x} + \left\|
   A_j u\right\|_{L^\infty_TL^2_{xy}} \le 4C \|A_ju_-\|_{L^2_{xy}},
  \quad j\in \{0,1,2\},\\
& \left\| A_j u\right\|_{L^p_T L^k_yL^2_x} \le 2 \left\|
  A_j  e^{-itH}u_-\right\|_{L^p_T  L^k_yL^2_x} ,\quad j\in \{0,1\}\Big\}.
  \end{align*}
From Strichartz estimates, we know that
for $j\in \{0,1\}$,
\begin{equation*}
 A_j e^{-itH}u_- \in L^p(\R;L^k_yL^2_x),\quad \text{so}\quad
 \left\|A_j e^{-itH}u_-\right\|_{L^p_TL^k_TL^2_x} \to 0\quad \text{as }T\to +\infty.
\end{equation*}
Since $\beta>0$, we infer that $\Phi_-$ maps $B_T$ to itself, for
$T$ sufficiently large, by \eqref{eq:waveH1}, and since the same
estimates yield,
for $j\in \{0,1\}$, 
\begin{align*}
   \left\| A_j\Phi_-(u)\right\|_{L^{p}_T L^{k}_yL^2_x}
 &\le\|A_je^{-itH}u_-\|_{L^{p}_T L^{k}_yL^2_x} \\
&\quad+
C\|u\|_{L^{p}_TL^{k}_y
    L^2_x}^{\si\beta}\|A_1u\|_{L^{p}_TL^{k}_y
    L^2_x}^{\si\beta}
\|u\|_{L^{\infty}_TZ}^{2\si(1-\beta)}\| A_j u\|_{L^{q}_TL^{r}_y L^2_x}.
\end{align*}
We have also, for $u_2,u_1\in B_T$, and typically $(q_1,r_1)\in \{(q,r),(\infty,2)\}$:
\begin{align*}
  \left\| \Phi_-(u_2)-\Phi_-(u_1)\right\|_{L^{q_1}_T L^{r_1}_yL^2_x}&\lesssim
  \max_{j=1,2}\| u_j\|_{L^\theta_TL^k_yL^\infty_x}^{2\si} \left\|
  u_2-u_1\right\|_{L^q_T L^r_yL^2_x}\\
\lesssim \left\|e^{-itH}u_-\right\|_{L^p_TL^k_yL^2_x}^{\si\beta}&
  \left\|A_1 e^{-itH}u_-\right\|_{L^p_TL^k_yL^2_x}^{\si\beta} \|u_-\|_Z^{2\si(1-\beta)} \left\| 
  u_2-u_1\right\|_{L^q_T L^r_yL^2_x}.
\end{align*}
Up to choosing $T$ larger, $\Phi_-$ is a contraction on $B_T$, so $\Phi_-$
has a unique fixed point in $B_T$, which solves
\eqref{eq:duhamel-}. Uniqueness as stated in Theorem~\ref{theo:waveop}
is an easy consequence of the above estimates.

\subsection{Wave operators in $\tilde Z$}
In the case $u_-\in \tilde Z$, we consider the whole set of
vector fields, $(A_j)_{0\le j\le 3}$. For $(q,r)$ a $(d-1)$-admissible
pair to be chosen later, we define
\begin{equation*}
  \tilde Z_T=\{u\in C((-\infty,-T];\tilde Z),\quad A_j(t)u\in
  L^q_TL^r_yL^2_x\cap L^\infty_TL^2_{xy},
  \forall j\in \{0,1,2,3\}\}.
\end{equation*}
We have, for all $(d-1)$-admissible pairs $(q_1,r_1)$, and all
$j\in \{0,1,2,3\}$,
\begin{equation}\label{phi-YT}
  \|A_j\Phi_-(u)\|_{L^{q_1}_TL^{r_1}_yL^2_x}\lesssim \|u_-\|_{\tilde
    Z}+\|u\|_{L^\theta_TL^k_yL^\infty_x}^{2\sigma}\|A_j u\|_{L^q_TL^r_yL^2_x}, 
\end{equation}
where $\theta$ and $k$ are again given by \eqref{def-theta-k}. If
\begin{equation}\label{eq:sob-emb}
H^{1/2-}(\R^{d-1}_y)\hookrightarrow L^k(\R^{d-1}_y),\text{ that is},\quad
   2\le k<\frac{2(d-1)}{d-2},
\end{equation}
we can find  $s$ and $\gamma$ satisfying \eqref{ass-ksg} and $s+\gamma=1$.
To obtain explicit time decay, apply Proposition~\ref{sob-anis} to
$v=e^{-i|y|^2/(2t)}u$. This  yields  
\begin{equation*}
\|u\|_{L^k_yL^\infty_x}=\|v\|_{L^k_yL^\infty_x}\lesssim
\|v\|_{L^k_yH^{s}_x}\lesssim\|v\|_{L^2_yH^{s}_x}^{1-\delta}\|v\|_{\dot{H}^\gamma_yH^{s}_x}^\delta,
\end{equation*}
where $\delta$ is defined by
$$\delta \gamma=(d-1)\left(\frac{1}{2}-\frac{1}{k}\right).$$
Then, since $\gamma+s=1$, it follows from the Young inequality as
in Lemma~\ref{lem2} that
\begin{align}\label{t-g}
\|v\|_{\dot{H}^\gamma_yH^{s}_x}&=|t|^{-\gamma}\left(\int|t\eta|^{2\gamma}(1+\xi^2)^s|\widehat{v}(\xi,\eta)|^2d\xi
d\eta\right)^{1/2}\\
&\lesssim|t|^{-\gamma}\left(\int\left(|t\eta|^{2}+(1+\xi^2)\right)|\widehat{v}(\xi,\eta)|^2d\xi
d\eta\right)^{1/2} \nonumber \\
&\lesssim
  |t|^{-\gamma}\left(\|A_3(t)u\|_{L^2_xL^2_y}+\|u\|_{L^2_yH^1_x}\right),\nonumber 
\end{align}
where in the last line, we have used Plancherel formula and
$$A_3(t)u=it e^{i|y|^2/(2t)}\nabla_y e^{-i|y|^2/(2t)}u=it e^{i|y|^2/(2t)}\nabla_y v.$$
Then, we deduce from \eqref{t-g} and Lemma~\ref{lem2} that for any
$u\in\tilde Z_T$ and $t\le -T$, we have 
\begin{equation}\label{pre-scat}
\|u(t)\|_{L^k_yL^\infty_x} 
\lesssim
\frac{1}{|t|^{(d-1)\left(\frac{1}{2}-\frac{1}{k}\right)}}\sum_{j=0}^3
\|A_j u\|_{L^\infty_TL^2_{xy}}
\lesssim \frac{1}{|t|^{(d-1)\left(\frac{1}{2}-\frac{1}{k}\right)}}\|u\|_{\tilde
  Z_T}.
\end{equation}
Then, provided $t\mapsto |t|^{-(d-1)(1/2-1/k)}$
belongs to $L^\theta(-\infty,-1)$, \eqref{phi-YT} and \eqref{pre-scat}
imply that  for every
$u\in \tilde Z_T$,
\begin{align}\label{phi-YT-bis}
\|A_j\Phi_-(u)\|_{\tilde Z_T}\lesssim\|u_-\|_{\tilde
  Z}+T^{2\sigma\(\frac{1}{\theta}-(d-1)\left(\frac{1}{2}-\frac{1}{k}\right)\)}\|u\|_{\tilde
  Z_T}^{2\sigma+1}.
\end{align}
Let us now explain how the parameters $\theta,k,q,r$ are chosen. Since
$\sigma>1/(d-1)$, one can choose $q>2$ large enough such that 
\begin{equation}\label{cond-scat-0}
(d-1)\sigma>\frac{2}{q}+1.
\end{equation}
Then, $r$ is chosen such that $(q,r)$ is a $(d-1)$-admissible pair, in
such a  way that \eqref{cond-scat-0} becomes
\begin{equation*}\label{cond-scat-1}
(d-1)\left(\sigma+\frac{1}{r}-\frac{1}{2}\right)>1,
\end{equation*}
which is equivalent to 
\begin{equation*}\label{cond-scat-2}
(d-1)\left(\sigma-\frac{2\sigma}{k}\right)=(d-1)\left(\sigma-1+\frac{2}{r}\right)>1-(d-1)\left(\frac{1}{2}-\frac{1}{r}\right)=1-\frac{2}{q}=\frac{2\sigma}{\theta},
\end{equation*}
where $\theta$ and $k$ are defined by \eqref{def-theta-k}. This is
precisely the condition $\theta(d-1)(\frac{1}{2}-\frac{1}{k})>1$ which
ensures that the right hand side of \eqref{pre-scat} belongs to
$L^\theta$. In terms of $k$, \eqref{cond-scat-0} is equivalent to 
\begin{equation*}
  \frac{1}{k}<1-\frac{1}{(d-1)\si}. 
\end{equation*}
This condition is consistent with \eqref{eq:sob-emb} if and only if
\begin{equation*}
  \frac{d-2}{2(d-1)}<1-\frac{1}{(d-1)\si},
\end{equation*}
which is equivalent to $\si>\frac{2}{d}$. 
\smallbreak

The rest of the proof is similar to the proof of local
well-posedness of the Cauchy problem: we take
$R$ and $T$ sufficiently large so  that the ball of radius $R$ in
$\tilde Z_T$ is  stable under the action of
$\Phi_-$, and so that $\Phi_-$ is a contraction on this ball, equipped
with the distance $\|u\|_{L^\infty_TL^2_{xy}}+
\|u\|_{L^q_TL^r_yL^2_x}$, in view of the previous estimates and
\begin{equation*}
  \|\Phi_-(u_2)-\Phi_-(u_1)\|_{L^{q_1}_TL^{r_1}_yL^2_x} \lesssim
  \max_{j=1,2}\|u_j\|^{2\si}_{L^\theta_TL^k_yL^\infty_x}
  \|u_2-u_1\|_{L^q_TL^r_yL^2_x}.
\end{equation*}
In view of \eqref{eq:equiv-norm}, the solution that we have constructed satisfies
\begin{equation*}
  e^{itH} u\in L^\infty((-\infty,-T];\tilde Z).
\end{equation*}
Uniqueness in this class follows from \eqref{eq:equiv-norm} and the
same approach as for the Cauchy problem. If $u_1$ and $u_2$ are two
solutions of \eqref{eq:nls} satisfying
\begin{equation*}
  e^{itH} u_j\in L^\infty((-\infty,-T];\tilde Z),\quad
  \|e^{itH}u_j(t)-u_-\|_{\tilde Z}\Tend t {-\infty}0,\quad j=1,2,
\end{equation*}
then for $\tau>T$,
\begin{equation*}
  \|u_2-u_1\|_{L^{q}_\tau L^{r}_yL^2_x} \lesssim
  \max_{j=1,2}\|u_j\|^{2\si}_{L^\theta_\tau L^k_yL^\infty_x}
  \|u_2-u_1\|_{L^q_\tau L^r_yL^2_x},
\end{equation*}
and \eqref{pre-scat} implies
\begin{equation*}
  \|u_2-u_1\|_{L^{q}_\tau L^{r}_yL^2_x} \lesssim
  \tau^{2\sigma\(\frac{1}{\theta}-(d-1)\left(\frac{1}{2}-\frac{1}{k}\right)\)}
  \|u_2-u_1\|_{L^q_\tau L^r_yL^2_x}.
\end{equation*}
Choosing $\tau$ sufficiently large, we have $u_2=u_1$ for $t\le -\tau$,
and  Theorem~\ref{theo:cauchy} yields $u_2\equiv u_1$.

\section{Asymptotic completeness}
\label{sec:ac}

In this section, we prove Theorem~\ref{theo:AC}. Three approaches are
available  to prove asymptotic completeness for 
nonlinear Schr\"odinger equations (without potential). The initial
approach (\cite{GV79Scatt}) consists in working with a
$\Sigma$ regularity. This  
makes it possible to use the operator $x+it\nabla$, whose main
properties are essentially those stated in Lemma~\ref{lem:opA}, and to
which an important evolution law (the 
pseudo-conformal conservation law) is associated. This law provides
important a priori estimates, from which asymptotic completeness
follows very easily in the case $\si\ge 2/d$, and less easily for
some range of $\si$ below $2/d$; see
e.g. \cite{CazCourant}. Unfortunately, this conservation law seems to
be bound to isotropic frameworks: an analogous identity is available
in the presence on an isotropic quadratic potential (\cite{CaSIMA}),
but in our present framework, anisotropy seems to rule out a similar
algebraic miracle. 
\smallbreak

The second historical approach relaxes the localization assumption on
the data, and allows
to work in $H^1(\R^d)$, provided that $\si>2/d$. It is based on
Morawetz inequalities: asymptotic completeness is then established in
\cite{LiSt78,GV85} for the case $d\ge 3$, and in \cite{NakanishiJFA} for the low
dimension cases $d=1,2$, by introducing more intricate Morawetz
estimates.  
\smallbreak

The most recent approach to prove asymptotic completeness in $H^1$
relies on the introduction of interaction Morawetz estimates in \cite{CKSTTAnnals},
an approach which has been revisited since, in particular in
\cite{PlVe09} and \cite{GiVe10}. In the anisotropic case, interaction
Morawetz have been used in \cite{AnCaSi-p} and \cite{TzVi-p} with two
different angles: in both cases, it starts with the choice of an
anisotropic weight in the virial computation  from
\cite{GiVe10,PlVe09}, but the interpretations of this computation are
then different. We start by presenting a unified statement of this
aproach in the next paragraph. 

\subsection{Morawetz estimates}
\label{sec:morawetz}

For $(x,y)\in \R^d$ and $\mu>0$, we denote by $Q(x,y,\mu)$ a dilation of
the unit cube
centered in $(x,y)$,
\begin{equation*}
  Q(x,y,\mu) = (x,y)+[-\mu,\mu]^d.
\end{equation*}
\begin{proposition}\label{prop:morawetz}
  Let $u\in C(\R;Z)$ be as in Theorem~\ref{theo:cauchy}. For every
  $\mu>0$, there exists $C_\mu>0$ such that
  \begin{align*}
    \left\| |\nabla_y|^{\frac{4-d}{2}}R\right\|_{L^2_{ty}(\R\times
      \R^{d-1})}^2 + \int_{\R}& \(\sup_{(x_0,y_0)\in
    \R^d}\iint_{Q(x_0,y_0,\mu)}|u(t,x,y)|^2dxdy \)^{\si+2} dt  \\
&\le
  C_\mu\sup_{t\in \R}\|u(t)\|_{H^1_{xy}}^4\lesssim \|u_0\|_Z^4,
  \end{align*}
where 
\begin{equation*}
  R(t,y) = \int_{-\infty}^{+\infty} |u(t,x,y)|^2dx
\end{equation*}
is the marginal of the mass density. 
\end{proposition}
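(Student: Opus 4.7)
The plan is to carry out an interaction Morawetz computation in the spirit of \cite{PlVe09, GiVe10}, using a weight depending only on the unconfined direction $y$, as is done in \cite{TzVi-p} and \cite{AnCaSi-p}. The decisive observation is that since $V(x)$ and the $x$-kinetic term act only in $x$, they commute with any $y$-only weight; after integrating the virial identity in $x$, the confinement simply drops out and one is effectively performing a $(d-1)$-dimensional Morawetz computation for the marginal $R(t,y)=\int|u(t,x,y)|^2\,dx$, whose $L^1_y$ norm is conserved.

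Concretely, I would introduce the anisotropic interaction virial
\begin{equation*}
M(t)=2\IM\iint_{\R^d\times\R^d}\bar u(t,x_1,y_1)\nabla_y u(t,x_1,y_1)\cdot\nabla_y w(y_1-y_2)\,|u(t,x_2,y_2)|^2\,dx_1dy_1dx_2dy_2,
\end{equation*}
with $w$ the Planchon--Vega weight $w(y)=|y|$ on $\R^{d-1}$ (mollified in low dimensions if necessary). Differentiating in time via \eqref{eq:nls} and performing the standard commutator computation produces two nonnegative contributions: a quadratic form $\mathcal{G}(t)$ in $\nabla_y u$ with kernel $\nabla_y^2 w(y_1-y_2)$, and a nonlinear term
\begin{equation*}
\mathcal{N}(t)=\sigma\iint \Delta_y w(y_1-y_2)\,|u(t,x_1,y_1)|^2|u(t,x_2,y_2)|^{2\si+2}\,dx_1dy_1dx_2dy_2\ge 0 .
\end{equation*}
The Ginibre--Velo symmetrization rewrites $\mathcal{G}(t)$ as a bilinear form in $R(t)$ whose kernel is $\nabla_y^2|y|$; this kernel inverts the fractional Laplacian of order $d-4$ on $\R^{d-1}$, so Plancherel identifies $\mathcal{G}(t)$ with $\||\nabla_y|^{(4-d)/2}R(t)\|_{L^2_y}^2$ (this is exactly the Planchon--Vega exponent $(3-(d-1))/2$ in ambient dimension $d-1$).

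For the cube-sup term, I would use that $\Delta_y w(y_1-y_2)=(d-2)/|y_1-y_2|\gtrsim\mu^{-1}$ on any dyadic shell $|y_1-y_2|\lesssim\mu$, restrict the $(x_1,y_1),(x_2,y_2)$ integrations to a cube $Q(x_0(t),y_0(t),\mu)$ nearly realizing the supremum at time $t$, and apply H\"older to extract a factor $(\iint_Q|u|^2)^{\si+2}$. Time-integrating $\dot M$ and bounding $\sup_t|M(t)|\lesssim\|u\|_{L^\infty_tH^1_{xy}}^2\|u\|_{L^\infty_tL^2_{xy}}^2\lesssim\|u_0\|_Z^4$ by Cauchy--Schwarz combined with Lemma~\ref{lem-emb-bm-hm} and the conservation laws of Theorem~\ref{theo:cauchy} then yields the announced estimate.

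The main obstacle I anticipate is the simultaneous extraction of both nonnegative contributions from the single weight $w(y)=|y|$: the marginal bound needs the sharp Planchon--Vega exponent, while the cube-localized lower bound for $\mathcal{N}$ requires keeping track of the scale $\mu$. Matching these, together with the mollification needed in the low-dimensional cases $d=2,3$ (where $\Delta_y w$ is borderline nonintegrable at the origin, as in \cite{NakanishiJFA}), is the delicate point; everything else is a direct adaptation of the now-standard interaction Morawetz machinery.
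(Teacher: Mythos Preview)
Your overall strategy is correct and matches the paper's: an interaction Morawetz computation with a weight depending only on $y$, so that the potential term $\rho\nabla V$ drops out of the virial identity. The decisive difference is that the paper does \emph{not} try to extract both terms from a single weight. Instead it runs the computation twice, with two different weights: $a(y)=|y|$ to obtain the $\||\nabla_y|^{(4-d)/2}R\|_{L^2_{ty}}^2$ term (via the Planchon--Vega identification of $\Delta_y|y|$ with the kernel of a negative-order fractional Laplacian on $\R^{d-1}$), and separately $a(y)=\langle y\rangle$ to obtain the cube-sup term (following \cite{TzVi-p}), using that $\Delta_y\langle y\rangle$ is smooth and satisfies $\inf_{Q(0,0,2\mu)}\Delta_y\langle y\rangle>0$. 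Each run gives the same right-hand side $\sup_t|M(t)|\lesssim\|u_0\|_{L^2}^3\|\nabla_y u\|_{L^\infty_tL^2}$, and the two estimates are then added.

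This two-weight trick is exactly what dissolves the obstacle you anticipate. Your single-weight approach with $|y|$ would work for $d\ge3$ (since $\Delta_y|y|=(d-2)/|y|$ is indeed bounded below on cubes), but in $d=2$ one has $\Delta_y|y|=2\delta_0$, and a delta kernel cannot be bounded below on a set of positive measure, so the cube-localization step fails; mollifying $|y|$ to repair this would in turn destroy the exact exponent $(4-d)/2$ in the marginal term. Using $\langle y\rangle$ for the second estimate avoids the issue entirely and requires no mollification.
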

\begin{proof}
  We resume the computations from \cite[Section~5]{AnCaSi-p}, and
  simply recall the main steps. 
\smallbreak

To shorten the notations, we set $z=(x, y)$. Following
\cite{GiVe10}, we write that if $u$ is a solution
to \eqref{eq:nls}, then we have 
\begin{equation}\label{eq:cons_laws}
\left\{\begin{aligned}
&\d_t\rho+\diver J=0\\
&\d_tJ+\diver\(\RE(\nabla\bar u\otimes\nabla u)\)+\frac{\sigma}{\sigma+1}\nabla\rho^{\sigma+1}
+\rho\nabla V=\frac14\nabla\Delta\rho,
\end{aligned}\right.
\end{equation}
where $\rho(t, z):=|u(t, z)|^2$ and $ J(t, z):=\IM(\bar u\nabla u)(t, z)$.
Let us define the virial potential
\begin{equation*}
I(t):=\frac12\iint_{\R^d\times\R^d}\rho(t, z)a(z-z')\rho(t, z')\,dzdz'=\frac12\langle\rho, a\ast\rho\rangle,
\end{equation*}
where $a$ is a sufficiently smooth even weight function which will be
be eventually a function of $y$ only. Here $\langle\cdot,\cdot\rangle$
denotes the scalar 
product in $L^2(\R^d)$. By using \eqref{eq:cons_laws}, we see that the time
derivative of $I(t)$ reads 
\begin{equation}\label{eq:mor_act}
\frac{d}{dt}I(t)=-\langle\rho,\nabla a\ast J\rangle=\iint\rho(t, z')\nabla a(z-z')\cdot J(t, z)\,dz'dz=:M(t),
\end{equation}
where $M(t)$ is the Morawetz action. By using again the balance laws
\eqref{eq:cons_laws} we have 
\begin{equation}\label{eq:Moraw}
\begin{aligned}
\frac{d}{dt}M(t)=&-\langle J,\nabla^2a\ast J\rangle+\langle\rho,\nabla^2a\ast\RE(\nabla\bar u\otimes\nabla u)\rangle
+\frac{\sigma}{\sigma+1}\langle\rho,\Delta a\ast\rho^{\sigma+1}\rangle
  \\
&-\langle\rho,\nabla a\ast(\rho\nabla V)\rangle-\frac14\langle\rho,\Delta a\ast\Delta\rho\rangle\\
=&-\langle\IM(\bar u\nabla u), \nabla^2a\ast\IM(\bar u\nabla u)\rangle
+\langle\rho,\nabla^2a\ast(\nabla\bar u\otimes\nabla u)\rangle\\
&+\frac{\sigma}{\sigma+1}\langle\rho,\Delta a\ast\rho^{\sigma+1}\rangle
-\langle\rho,\nabla a\ast(\rho\nabla V)\rangle
-\frac14\langle\rho,\Delta a\ast\Delta\rho\rangle,
\end{aligned}
\end{equation}
where in the second term we dropped the real part because of the
symmetry of $\nabla^2a$ (here, the notation $\nabla^2a\ast\RE(\nabla\bar
u\otimes\nabla u)$ stands for
$\sum_{j,k}\partial^2_{jk}a\ast\RE(\partial_k\bar u\partial_j u)$). Leaving out the details presented in
\cite{AnCaSi-p} and \cite{TzVi-p}, the computation shows that if
$\nabla^2 a$ is non-negative and if $a$ depends on $y$ only (so we
have $\nabla a(z_1)\cdot \nabla V(z_2)=0$ for all $z_1,z_2\in \R^d$),
then we have:
\begin{equation}\label{eq:dtm2}
\frac{d}{dt}M(t)\ge \frac12\langle\nabla_y\rho,
\Delta_ya\ast\nabla_y\rho\rangle 
+\frac{\sigma}{\sigma+1}\langle\rho, \Delta_ya\ast\rho^{\sigma+1}\rangle.
\end{equation}
Now we consider two choices for the weight $a$. First, for $a(y)=|y|$,
we have indeed $\nabla^2 a\ge 0$ as a symmetric matrix, and for $d\ge
3$, 
 $\Delta_ya(y)=\frac{d-2}{|y|}$: it is, up to a multiplicative
 constant, the integral kernel of the operator  
$(-\Delta_y)^{-\frac{d-2}{2}}$, that is,
\begin{equation*}
\((-\Delta_y)^{-\frac{d-2}{2}}f\)(y)=\int_{\R^{d-1}}\frac{c}{|y-y'|}f(y')\,dy'.
\end{equation*}
Thus, by recalling $z=(x, y)$, we obtain
\begin{multline*}
\iint_{\R^d\times\R^d}\frac{1}{|y-y'|}\nabla_y\rho(t, z')\cdot\nabla_y\rho(t, z)\,dz'dz\\
=\iiint_{\R\times\R\times\R^{d-1}}\nabla_y\rho(t, x, y)
\cdot\nabla_y(-\Delta_y)^{-\frac{d-2}{2}}\rho(t, x', y)\,dxdx'dy.
\end{multline*}
Hence, if we define the marginal of the mass density
\begin{equation*}
R(t, y):=\int_{\R}\rho(t, x, y)\,dx,
\end{equation*}
the last integral also reads
\begin{equation*}
\int_{\R^{d-1}}\left||\nabla_y|^{\frac{4-d}{2}}R(t, y)\right|^2\,dy.
\end{equation*}
We now plug this expression into \eqref{eq:dtm2} and we integrate in
time. Furthermore,  the second term in the right hand side in
\eqref{eq:dtm2} is positive. We then infer 
\begin{equation}\label{eq:mor_est}
\int_{-T}^T\int_{\R^{d-1}}\left||\nabla_y|^{\frac{4-d}{2}}R(t,
  y)\right|^2\,dydt\le C\sup_{t\in[-T, T]}|M(t)|.
\end{equation}
Furthermore, with our choice of the weight $a$, we have
\begin{equation*}
|M(t)|=\left|\iint\rho(t, z')\frac{y-y'}{|y-y'|}\cdot\IM(\bar u\nabla_yu)(t, z)\,dz'dz\right|
\le\|u_0\|_{L^2(\R^d)}^3\|\nabla_yu(t)\|_{L^2(\R^d)},
\end{equation*}
hence the first part of Proposition~\ref{prop:morawetz} in the case
$d\ge 3$. In the case $d=2$, the choice $a(y)=|y|$ leads to $a''(y) =
2\delta_0$, and the conclusion remains the same.
\smallbreak

Now, as in \cite{TzVi-p}, consider the weight $a(y)=\<y\>$: we still
have $\nabla^2 a\ge 0$. Resume \eqref{eq:Moraw}: the computations from
  \cite{TzVi-p,PlVe09}
 yield a rearrangement of the terms so that instead of
  \eqref{eq:dtm2}, we now have
\begin{equation*}
\frac{d}{dt}M(t)\ge \frac{\sigma}{\sigma+1}\langle\rho,
\Delta_ya\ast\rho^{\sigma+1}\rangle. 
\end{equation*}
The right hand side is equal to 
\begin{equation*}
  \frac{\si}{\si+1}\iint\iint |u(t,x_1,y_1)|^2 \Delta a (y_1-y_2)
  |u(t,x_2,y_2)|^{2\si+2}dx_1dy_1dx_2dy_2.
\end{equation*}
Following \cite{TzVi-p}, we note that
\begin{equation*}
  \inf_{Q(0,0,2\mu)}\Delta_y\(\<y\>\)>0,
\end{equation*}
so the above term is bounded from below by constant times
\begin{equation*}
  \sup_{(x_0,y_0)\in \R^d}\iint_{Q(x_0,y_0,\mu)}
  \iint_{Q(x_0,y_0,\mu)}  |u(t,x_1,y_1)|^2  |u(t,x_2,y_2)|^{2\si+2}dx_1dy_1dx_2dy_2.
\end{equation*}
H\"older inequality yields
\begin{equation*}
  \iint_{Q(x_0,y_0,\mu)} |u(t,x_2,y_2)|^{2\si+2}dx_2dy_2\gtrsim
  \(\iint_{Q(x_0,y_0,\mu)} |u(t,x_2,y_2)|^{2}dx_2dy_2\)^{\si+1}. 
\end{equation*}
Finally, with this second choice for $a$, we still have
\begin{equation*}
  |M(t)|\le \|u_0\|_{L^2_{xy}}^3\|\nabla_y u(t)\|_{L^2_{xy}},
\end{equation*}
hence the result by integrating in time. 
\end{proof}

\subsection{End of the argument}

To prove Theorem~\ref{theo:AC} in the case $d\le 4$, one can resume
the approach followed in \cite[Section~6]{AnCaSi-p} which is readily adapted to
our framework, the only difference being that the function space and
the related set of vectorfields are not the same here. 
\smallbreak

However, as
pointed out in \cite{TzVi-p}, the fact that negative order derivatives
are involved in the first term in Proposition~\ref{prop:morawetz}
makes it delicate to use this term when $d\ge 5$, and requires fine
harmonic analysis estimates in the case $V=0$; it is not clear whether
or not 
these tools can be adapted to the present setting. This is why the second term in
Proposition~\ref{prop:morawetz}, which corresponds to the one
considered in \cite{TzVi-p}, is more efficient then, and allows to
prove Theorem~\ref{theo:AC}  for all $d\ge 2$. 
\smallbreak

The first step stems from \cite{Vi09}:
Theorem~\ref{theo:cauchy} and Proposition~\ref{prop:morawetz}  imply that
\begin{equation*}
  \|u(t)\|_{L^r_{xy}}\Tend t {+\infty} 0,\quad \forall 2<r<\frac{2d}{(d-2)_+}.
\end{equation*}
The end of the proof is presented in \cite{TzVi-p}, and is readily
adapted to our framework: it consists in choosing suitable  Lebesgue
exponents and applying inhomogeneous Strichartz estimates for
non-admissible pairs, which follow in our case from
\cite{AnCaSi-p,FoschiStri}. Since the proof is then absolutely the
same as in \cite{TzVi-p}, we choose not to reproduce it here.

\bibliographystyle{siam}

\bibliography{partiel}

\end{document}